\theoremstyle{plain}
\newtheorem{thm}{Theorem}[section]
  \theoremstyle{definition}
  \theoremstyle{plain}
  \newtheorem{prop}[thm]{Proposition}
  \theoremstyle{plain}
  \newtheorem{lem}[thm]{Lemma}
  \theoremstyle{remark}
  \newtheorem{rem}[thm]{Remark}
\theoremstyle{remark}
\title{The "Hot Spots" Conjecture on the Vicsek Set}
\author{Marius Ionescu}
\address{Department of Mathematics\\  United States Naval Academy\\
  Annapolis, MD 21402 USA}
\email{ionescu@usna.edu}
\author{Thomas L. Savage}
\address{Department of Mathematics\\  United States Naval Academy\\
  Annapolis, MD 21402 USA}
\email{tsavage1352@att.net}
\begin{document}

\begin{abstract}
We prove the “hot spots” conjecture on the Vicsek set. Specifically, we
will show that every eigenfunction of the second smallest eigenvalue
of the Neumann Laplacian on the Vicsek set attains its maximum and
minimum on the boundary. 
\end{abstract}

\maketitle

\section{Introduction}

The ``hot spots'' conjecture studies whether a flat piece of metal that
is given an initial heat distribution will achieve its highest
temperatures on its boundary given enough time. That is, the
conjecture claims that in a
two-dimensional, bounded, connected domain $D$, the heat at point $x$
at time $t$, $u(x,t)$, achieves its maximum value on the boundary of
$D$. The “hot spots” conjecture was first posed by Rauch in 1974. An
equivalent formulation of the conjecture is as follows: every eigenfunction of
the second eigenvalue of the Neumann Laplacian attains its maximum and
minimum on the boundary. The conjecture has been shown to be true for some
Euclidean domains \cite{RoKry_JFA99,RaKry_JAMS04,DaNi_JAMS00,Ya_JMP09}
including recently for thin curved strips \cite{KrTu_2017}, but it has
also been shown to fail in others \cite{KryWe_AM99,Kry_Duke05}.

There is now a notion of a Laplacian on many fractals, and the theory
of eigenfunctions of the Laplacian is well developed in many
cases. Therefore, one can formulate the ``hot spot'' conjecture on
these fractals.  We are going to use the theory developed by Kigami
\cite{Kig}, see also \cite{Strichartz} that applies to the class of
post critically finite (p.c.f.) fractals. For many such fractals,
eigenvalues and eigenfunctions of the Laplacian can be computed
explicitly via a method called spectral decimation
\cite{Ra_Tou,Shi_JJIAM91,FuSh_PA92,Shi_JJIAM96}.
The “hot spots” conjecture has
been shown to hold on the Sierpinski gasket and higher dimensional
variants  
\cite{Ruan_SG,Ruan_SG3,LiRuan} but fail on the
hexagasket fractal \cite{Lau}. The hexagasket fractal is determined
by an iterated function system consisting of six
contractions. However, the analytic boundary in the sense of Kigami 
 studied in \cite{Lau} consists of  only three of the six fixed points of the
 iterated function system. On the other hand, the
 boundary of the Sierpinski gasket and its higher dimensional variants
 mentioned above consists of all of the fixed points of the iterated function
 system that determines the gasket. Therefore, one might wonder
 whether the failure of the ``hot spots'' conjecture on the hexagasket
 fractal might be due to the ``smaller'' boundary considered in \cite{Lau}. 
The Vicsek set \cite{Bar_LNM98,MalTep_MPAG03,Met_AAM93}  is
another type of fractal that has been studied heavily.  Zhou
\cite{Zhou} described the spectral decimation 
on a family of Vicsek sets, $VS_n$, in terms of Chebyshev polynomials,
and the authors of \cite{ConStrWhe} used Zhou's formulas to study the
Laplacian and spectral operators on the Vicsek set. We study in this
paper the Vicsek set $VS_2$ that 
is generated by five contractions; however, its analytic boundary
consists of only four of the five fixed points of the iterated
function systems.  Our
main theorem states that, unlike the hexagasket, the ``hot spots'' conjecture on
the Vicsek set is
true.  The proof of the main theorem is inspired by 
proofs in \cite{Ruan_SG,Ruan_SG3} and \cite{LiRuan}. It is,
however, more involved and technical.

The organization of the paper is as follows: the second part of the
introduction contains a background on the Vicsek set $VS_2$, and the energy
and the Laplacian on the Vicsek set. In section \ref{sec:2}, we review
 the Neumann Laplacian on $VS_2$, and show how to use the spectral
decimation to determine the second smallest eigenvalue of the Neumann
Laplacian together with a basis of its eigenspace. Section
\ref{sec:main} contains our main theorem. The proof of the theorem
follows relatively easily from Lemma \ref{lem:main} by an argument
similar to the one in \cite{Ruan_SG}. The proof of the lemma,
however, is very long and technical and occupies the entirety 
of Section \ref{sec:proof}. We placed the statement and proof of
some formulas used throughout the paper in the Appendix in order to
help with the readability of Section \ref{sec:proof}.

\textbf{Acknowledgment}: This paper grew up from the second's author project that  fulfilled the
requirements of 
the Honors program in the Department of Mathematics at the United
States Naval Academy under the supervision of the first author. We
thank all the members of the Department that 
read the Honors project  and gave us feedback, especially Professor
David Joyner, whose many suggestions led to a much better Honors
project and, we hope,  a better paper. The authors would also like to thank the
anonymous referee and language editor for many useful suggestions that led to an improvement in the
presentation of the paper.

\subsection{Background}
We begin by reviewing several  concepts in analysis on fractals as
applied to the Vicsek set.  First,  an iterative function system on a
complete metric space $X$ is a finite
set of contraction mappings  ${F_i}:X \rightarrow X$, $i=1,\dots, n$
\cite{Hut,Barns}. Given such an iterated function system
there exists a unique compact invariant set $K\subseteq X$; that is,
$K$ satisfies the following self-similar property, 
\[
K=F_1(K)\bigcup F_2(K)\bigcup \dots \bigcup F_n(K).
\]

The main object of study in this paper is the second order Vicsek set,
$VS_2$, which is the unique invariant subset of $\mathbb{R}^2$ of the
iterated function system defined by the following five similarities:  
\begin{equation}\label{eq:ifs}
F_i(x) = \frac{1}{3}(x-p_i) + p_i,
\end{equation}
where $p_1=(0,1)$, $p_2=(1,1)$, $p_3=(1,0)$, $p_4=(0,0)$, and $p_5=(1/2,1/2)$.



A picture of the Vicsek set is provided in the following figure.
\begin{figure}[h]
\begin{center}
\includegraphics[scale=0.6]{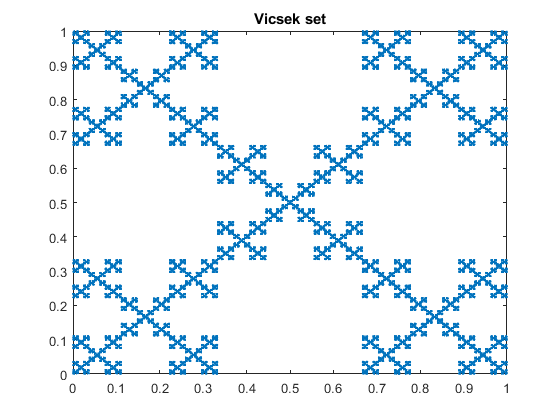}
\end{center}
\caption{Vicsek set.}
\end{figure}

The Vicsek set $VS_2$ is an example of what in the literature is called a p.c.f fractal
and, thus, Kigami's theory \cite{Kig} (see also \cite{Strichartz})
applies to $VS_2$. We describe next how this theory 
 is applied to the Vicsek set in order to define the standard
energy and Laplacian on $VS_2$.
A useful feature of the Vicsek is that it can be approximated by an
increasing sequence of graphs, $\Gamma_m$ as follows: the level zero
graph approximation of the Vicsek set, $\Gamma_0$, shown below,
consists of the set of vertices $V_0$ at $q_1 = (0,1)$, $q_2 = (1,1)$,
$q_3 = (1,0)$, and $q_4 = (0,0)$ connected as in a complete
graph. Note that $F_i(q_i) = q_i$, where $i={1,2,3,4}$. The set $V_0$ 
is also the boundary of the fractal in the sense of \cite{Kig,Zhou}.
We call $\Gamma_0$  the graph  0-cell.
\begin{figure}[h]
\begin{center}
\includegraphics[scale=0.6]{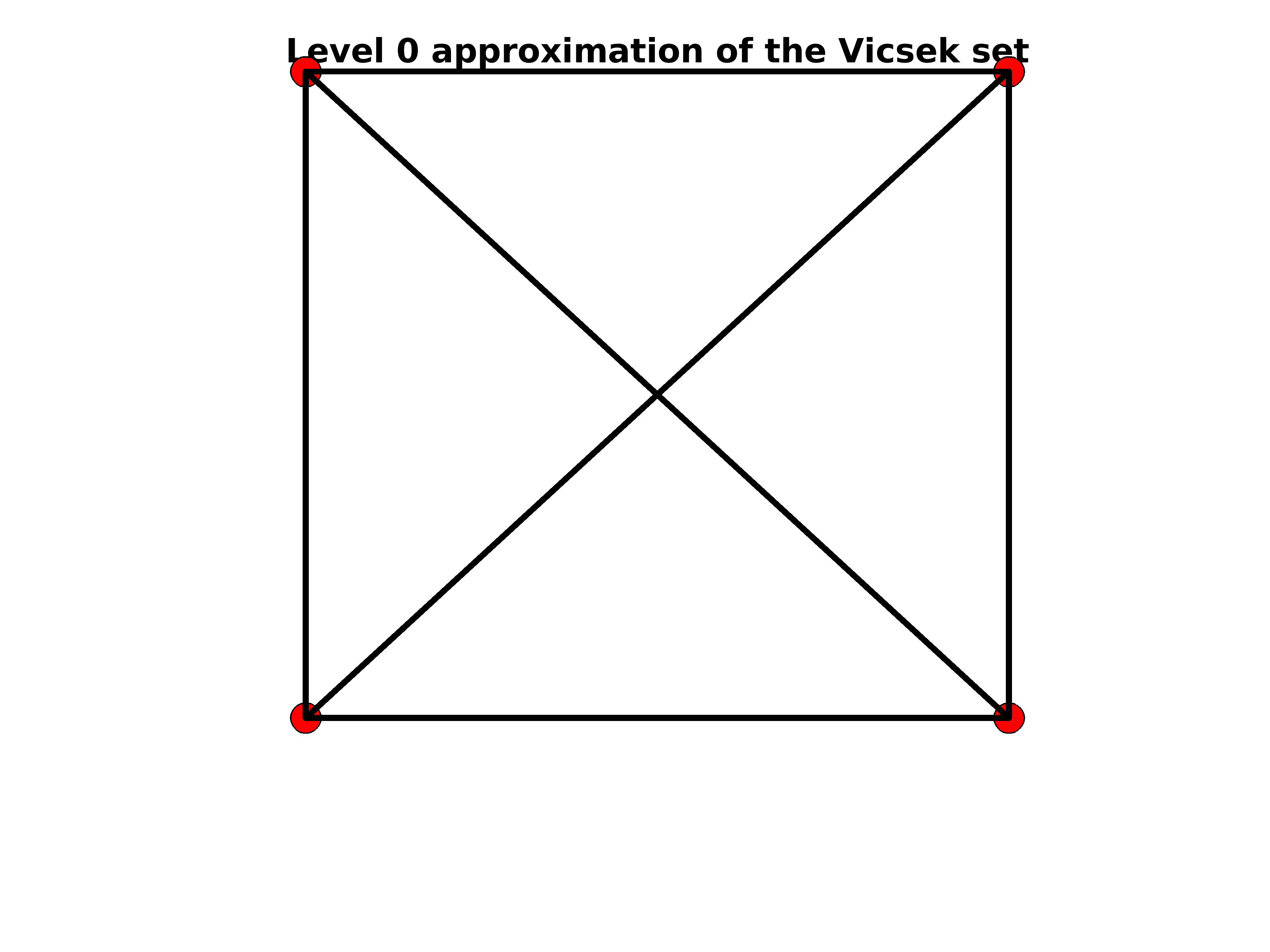}
\end{center}
\caption{Level 0 approximation.}
\end{figure}
The level 1 graph approximation of the Vicsek set, $\Gamma_1$ is
constructed as follows: each of the five scaled versions of the level
0 Vicsek set which comprise $\Gamma_1$ can be obtained by applying
$F_1(\Gamma_0)$, $F_2(\Gamma_0)$, etc. Therefore
\[
  \Gamma_1=F_1(\Gamma_0)\bigcup F_2(\Gamma_0)\bigcup
  F_3(\Gamma_0)\bigcup F_4(\Gamma_0) \bigcup
  F_5(\Gamma_0) 
\]
and, in particular, the set of
vertices of $\Gamma_1$ equals $V_1 = F_1(V_0) \bigcup F_2(V_0) \bigcup
\dots \bigcup F_5(V_0)$.
We call the sets $F_i(\Gamma_0)$ 
graph $1$-cells. We note that there are five graph 1-cells.
\begin{figure}[h]
\begin{center}
\includegraphics[scale=0.6]{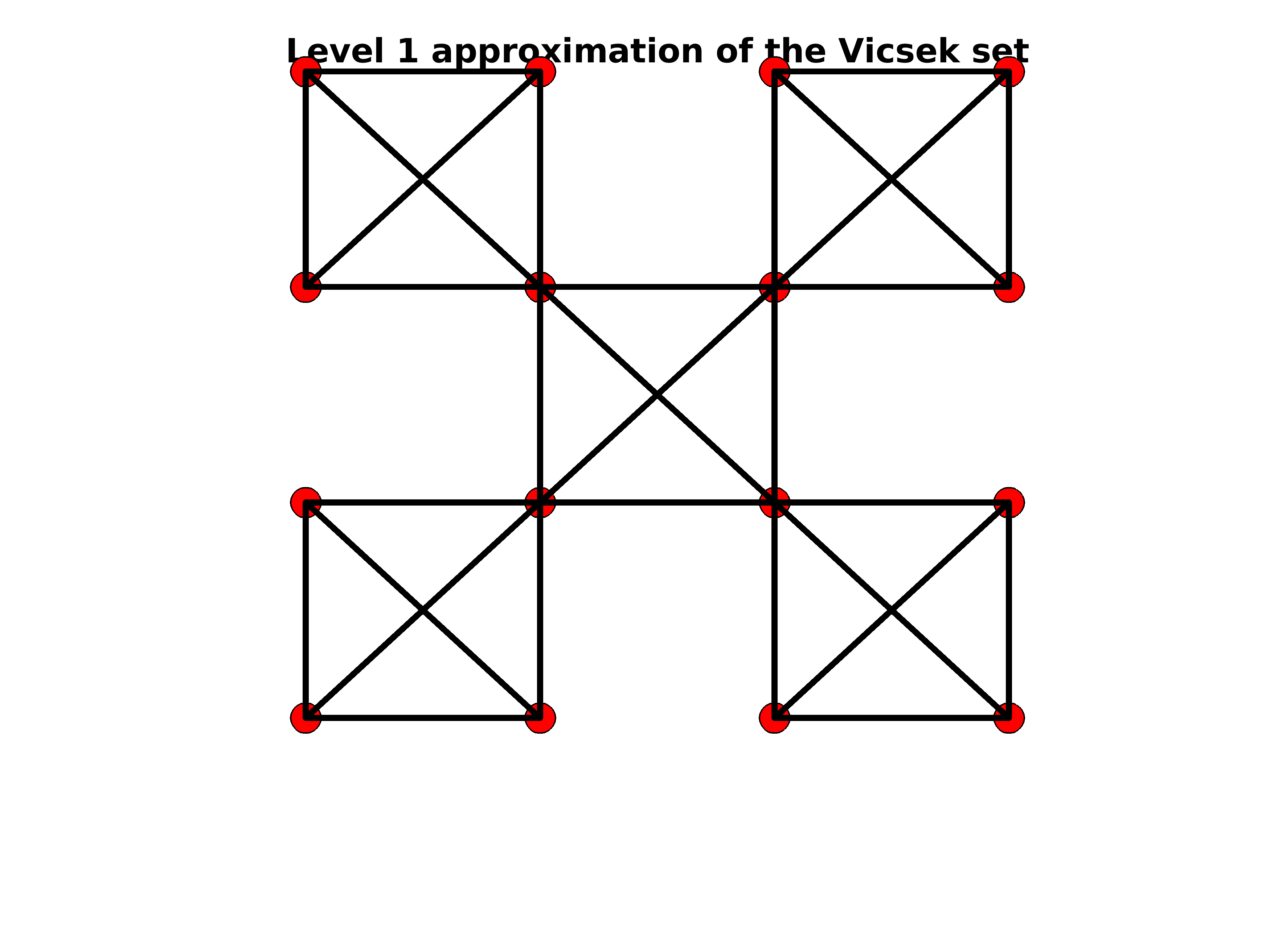}
\end{center}
\caption{Level 1 approximation.}
\end{figure}
In the general case, for $m\ge 1$, the level $m+1$ graph
approximation $\Gamma_{m+1}$ is obtained from the level $m$ graph
approximation $\Gamma_m$  via
\[
\Gamma_{m+1} = F_1(\Gamma_m) \bigcup F_2(\Gamma_m) \bigcup \dots \bigcup F_5(\Gamma_m).
\]
The image under the maps $F_i$ of the graph $m$-cells form the
graph $(m+1)$-cells. We note that each graph $m$-cell contains five graph $(m+1)$-cells.
That is, when going from level $m$ to level $m+1$, a graph $m$-cell is
going to be replaced by five graph $(m+1)$-cells. We write $V_m$ as the set of
vertices at the level $m$ graph approximation. Inductively, it can be
shown that $V_m \subset V_{m+1}$, and that $\bigcup_{m=0}^\infty V_m$ is a
dense subset of $VS_2$. Thus, it suffices to study continuous
functions on $\bigcup_{m=0}^\infty V_m$ and extend them via continuity
to $VS_2$. 

As detailed by \cite{Strichartz}, we can define graph energy at level
$m$ for the Vicsek set $VS_2$ as follows: 
\[
  E_m(u) = \sum_{\{(x,y)|x\sim y\}} |u(x)-u(y)|^2,
\]
where $x\sim y$
means that $x$ and $y$ are neighboring vertices in $\Gamma_m$ (that is, there
is an edge in $\Gamma_m$ joining $x$ and $y$). Energy
generally requires renormalization with a given renormalization
factor. For $VS_2$, the renormalization factor is equal to 3
\cite{Zhou}. Thus, for $VS_2$, the renormalized graph energy is
$\mathcal{E}_m (u) = 3^{-m}E_m(u)$. One can define the fractal's
energy, $\mathcal{E}(u)$, via \[ 
\mathcal{E}(u) = \lim_{m\rightarrow\infty} \mathcal{E}_m(u),
\]
with the domain of the energy, $\operatorname{dom}\mathcal{E}$,
consisting of all continuous functions $u$ on $VS_2$ such that
$\mathcal{E}(u)<\infty$. As detailed in \cite{Kig,Strichartz,Zhou},
the choice of the renormalization constant in the definition of
$\mathcal{E}_m$ guarantees that the domain of the energy is
non-trivial.  Now, knowing that the energy exists and can be written
as above, we know from \cite{Kig,Strichartz} that we can determine a
fractal's Laplacian. In dom$\mathcal{E}$, $\mathcal{E}$ extends via
the polarization formula to the bilinear function $\mathcal{E}(u,v)$. 
 We consider the
standard invariant measure $\mu$ on $VS_2$; that is, the unique
measure that satisfies the following property (see \cite{Hut} and
\cite{Barns} for details): 
\[
\int_{VS_2}f(x)\,d\mu(x)=\frac15\sum_{i=1}^5\int_{VS_2}f(F_i(x))\,d\mu(x).
\]
Define the weak formulation of the Laplacian as follows (\cite{Kig,Strichartz}): we say that a function $u\in\operatorname{dom}\mathcal{E}$  belongs to the domain $\operatorname{dom}\Delta$ of the Laplacian if there is a continuous function $f$ on $VS_2$ such that 
\[
\mathcal{E} (u,v) = - \int fv d\mu
\]
for all $ v\in \text{dom}_0 \mathcal{E}:=\{v\in
\text{dom}\mathcal{E}\,:\,v|_{V_0}=0\}$. In this case we write $\Delta
u=f$. As proven in \cite{Zhou}, there exists an equivalent pointwise
formula for the Laplacian on the Vicsek set. This formula
is given as the normalized limit of a series of graph approximations
and is written as follows for $VS_2$: 
\[
\Delta u(x) = \lim_{m\rightarrow\infty} 15^m \Delta_m u(x)
\]
for all $x$ not in the boundary $V_0$. Here the graph  Laplacian of
the graph $\Gamma_m$, denoted by $\Delta_m$, is defined by  
\begin{equation}\label{eq:Lap}
\Delta_m u(x) = \frac{1}{\text{deg} x} \sum_{y\sim x} (u(y) - u(x)) 
\end{equation}
for all $x\in V_m\setminus V_0$, where $\text{deg}x$ represents the number of
neighbors of $x$ in $\Gamma_m$. The degree of a vertex $x$ in $\Gamma_m$ is either $3$ or
$6$ for all $m\ge 1$.

\section{Neumann Laplacian and spectral decimation}
\label{sec:2}
We study the Neumann Laplacian in this paper. That is, the Laplacian
as defined with Neumann boundary conditions. Neumann boundary
conditions behave such that Equation \eqref{eq:Lap} holds for all
$x\in V_m$, including the boundary $V_0$. 

The Neumann Laplacian of the level 0 graph approximation $\Gamma_0$
for $VS_2$ is then given by the following matrix:
\[
\Delta_0=\left[
\begin{array}{cccc}
 1     &        -1/3  &         -1/3  &         -1/3     \\
      -1/3  &          1    &         -1/3    &       -1/3     \\
      -1/3   &        -1/3   &         1       &      -1/3     \\
      -1/3   &        -1/3    &       -1/3     &       1   
\end{array}
\right].
\]
The $1/3$ scaling factor outside of the matrix is derived from the
$1/\text{deg}x$ that appears in front of the summation in the equation for
$\Delta_m u(x)$. 

In order to study eigenvalues and eigenfunctions on $VS_2$, we use the
process of spectral decimation as described in \cite{Strichartz,Zhou}
that we review next. First, there is a local extension algorithm which
shows a unique way to extend a function $u$  that satisfies the
eigenvalue equation $-\Delta_mu=\lambda_mu$ on $V_m\setminus V_0$ to
a function that we still denote by $u$  that satisfies the eigenvalue
equation $-\Delta_{m+1}u=\lambda_{m+1}u$ on $V_{m+1} \setminus V_0$. Moreover there
exists a rational function $R(\lambda)$ such that 
$\lambda_m$ = $R(\lambda_{m+1})$ if $\lambda_m$ is not a forbidden
eigenvalue. That is, a singularity of the function $R$. It is
"forbidden" to decimate to such eigenvalues. Because forbidden
eigenvalues do not have a predecessor, i.e. there is no
$\lambda_{m-1}$ corresponding to $\lambda_m$, we say that forbidden
eigenvalues are "born" at a level of approximation $m$.
Spectral decimation for $VS_2$ is performed as follows.  
First, define 
\begin{align}\label{eq:f2}
f_2(\lambda) &= T_2(3\lambda - 1) - 3T_1(3\lambda - 1)\\
g_2(\lambda) &= U_1(3\lambda - 1) - U_0(3\lambda - 1)\nonumber\\
h_2(\lambda) &= U_1(3\lambda - 1) - 3U_0(3\lambda - 1)\nonumber
\end{align}
where $T_2$ and $U_2$ represent the Chebyshev polynomials of the 1st
and 2nd kind, i.e.  $T_1(\lambda)=\lambda$,
$T_2(\lambda)=2\lambda^2-1$, $U_0(\lambda)=1$, and
$U_1(x)=2\lambda$. Therefore, $f_2(\lambda)=18\lambda^2-21\lambda+4$,
$g_2(\lambda)=6\lambda-3$, and $h_2(\lambda)=6\lambda-5$. Zhou proved
in \cite{Zhou} (see also \cite{Strichartz,ConStrWhe})
that the spectral decimation function $R$ is
\begin{equation}\label{eq:R}
R(\lambda)=\lambda g_2(\lambda)h_2(\lambda)=36\lambda^3-48\lambda^2+15\lambda.
\end{equation}
Additionally, the forbidden eigenvalues of $VS_2$ are 4/3 and the
zeroes of $f_2$ and $g_2$, 
0, 1/2, and $(7\pm
17^{0.5})/12$. 
The extension of eigenfunctions of $VS_2$ from one  level to the next
is given by \cite{ConStrWhe}: 
\begin{equation}\label{eq:matrix}
-(X+\lambda M)^{-1}J=\gamma\left[
\begin{array}{cccccccccccc}
 a & b & a & c & c & d & d & c & c & c & d & c\\
 c & d & c & a & a & b & d & c & c & c & d & c\\
 c & d & c & c & c & d & b & a & a & c & d & c\\
 c & d & c & c & c & d & d & c & c & a & b & a   
\end{array}
\right]^T
\end{equation}
where 
\begin{align*}
a &= 9-42\lambda +36\lambda^2,   c = 1\\
b &= 6(1-4\lambda+3\lambda^2),   d = 2-3\lambda\\
\gamma &= \frac{1}{3(4-29\lambda+60\lambda^2-36\lambda^3)}.
\end{align*}
Note that $a,b,c,d$ and $\gamma$ are functions of $\lambda$. Hereafter
$\lambda$ is any number that is not a forbidden eigenvalue of 
$VS_2$. In $\Gamma_1$, $J$ is equal to the $V_0 \times (V_1 \setminus
V_0$) adjacency matrix, $X$ is the adjacency matrix of ($V_1\setminus
V_0$) with the degrees of every vertex as its diagonal entries, and
$M$ is a diagonal matrix such that $M_{ii}$ = -$X_{ii}$. Going from
$\Gamma_m$ to $\Gamma_{m+1}$, this matrix is applied to the vertices
of one graph $m$-cell to obtain the values on the five new graph
$(m+1)$-subcells.

We follow the convention from \cite[Figure 4]{ConStrWhe} when it comes
to the labeling of the columns of the matrix \eqref{eq:matrix}. That
is, the first column corresponds to the point $q_5=F_1(q_2)$, the
second column corresponds to the point $q_6=F_1(q_3)=F_5(q_1)$, the
third column corresponds to $q_7=F_1(q_4)$, the fourth column
corresponds to $q_8=F_2(q_1)$ and so on. To better understand the meaning of the matrix
\eqref{eq:matrix}, let $\lambda_0$ be an eigenvalue of $\Delta_0$, and let
$u$ be a $\lambda_0$ eigenvector on $V_0$. To simplify the notation,
write $u(q_i)=u_i$, $i=1,2,3,4$.
 Let $\lambda_1$ be a
solution of $R(\lambda)=\lambda_0$. Then $u$ is extended to a
$\lambda_1$ eigenvector on $V_1$ using \eqref{eq:matrix} as described next. 
First, $u|_{V_0}$ does not change. The
extension of $u$ to $q_5=F_1(q_2)$ is computed using the first column
of \eqref{eq:matrix} as follows:
\[
  u(q_5)=\gamma au_1+\gamma cu_2+\gamma
  cu_3+\gamma cu_4,
\]
where $\gamma, a$, and $c$ are evaluated at $\lambda_1$. The extension
of $u$ to $q_6$ is computed using the second column of \eqref{eq:matrix} as follows:
\[
  u(q_6)=\gamma bu_1+\gamma du_2+\gamma du_3+\gamma du_4,
\]
where $\gamma, b$, and $d$ are evaluated at $\lambda_1$. The
computation of the extension of $u$ to the remaining vertices in $V_1$
is computed based on the corresponding columns in \eqref{eq:matrix}.



The eigenvalue extension function $R(\lambda)=36\lambda^3 -
48\lambda^2 + 15\lambda$ has three local inverses \cite{ConStrWhe}. Let $\phi_1$,
$\phi_2$, $\phi_3$ denote the inverse functions 
of $R$ in increasing order; that is, $\phi_1$ is the inverse of
$R(\lambda)$ on the interval $(0,\frac{8-19^{0.5}}{18})$, $\phi_2$ is
the inverse of $R(\lambda)$ on
$(\frac{8-19^{0.5}}{18},\frac{8+19^{0.5}}{18})$, and $\phi_3$ is the
inverse of $R(\lambda)$ on $(\frac{8+19^{0.5}}{18},1)$. Observe $\phi_1(x)<
\phi_2(y)<\phi_3(z)$ for all $x,y,z$ in the corresponding domain, and
$\phi_2$ is decreasing while $\phi_1$ and $\phi_3$ are increasing. Let
$\rho_2 = 15$ be the renormalization factor for the
Laplacian. 
The Neumann eigenvalues are non-negative and they accumulate at
$\infty$. Then the rules for spectral
decimation in the case of $VS_2$ are summarized as follows
\cite{ConStrWhe}: 
\begin{enumerate}
\item For each Neumann eigenvalue $\lambda$, there is an infinite word
  $\{\omega_j\}_{j=1}^\infty$, where $\omega_j\in \{1,2,3\}$ for all
  $j\ge 1$, such that  $\lambda$ equals  
\begin{align*}
\lim_{m\rightarrow\infty} 15^m\phi_{\omega_m} \circ
  \phi_{\omega_{m-1}} \circ \ ... \circ \phi_{\omega_1}(0) \text{ or}
  \\ 
\lim_{m\rightarrow\infty} 15^{m+k}\phi_{\omega_m} \circ \phi_{\omega_{m-1}} \circ \ ... \circ \phi_{\omega_1}(4/3).
\end{align*}
 The existence of the limit is proven in \cite{Zhou}. In the first
 case, the eigenvalue is in the 0-series, and in the second series
 the eigenvalue is in the 4/3 series born on level  $k$
\item All but a finite number of the $\omega_m$ are equal to 1
\item For the 0-series, the first $\omega_j$ with  $\omega_j \ne
  1$ must be an odd number, and for the 4/3 series, $\omega_1$ must be
  an odd number but $\omega_1 \ne 3$ 
\item The multiplicity of each eigenvalue in the 0-series is 1, while the multiplicity of each 4/3-series eigenvalue on level $k$ is $2(5)^k + 1$.
\end{enumerate}

In the remainder of the paper, we use the spectral decimation as
applied to the following setup. Let $\lambda_0 = 4/3$ be the second
smallest eigenvalue of $\Delta_0$. Then $\lambda_0 = 4/3$ has
multiplicity 3 and a basis  for its eigenspace on $V_0$ is given by
the following eigenvectors: 
\begin{equation}\label{eq:bases}
u_1=\left[
\begin{array}{r}
 1\\
 0\\
 0\\
 -1 
\end{array}
\right],\,
u_2=\left[
\begin{array}{r}
 0\\
 1\\
 0\\
 -1 
\end{array}
\right],\,
u_3=\left[
\begin{array}{r}
 0\\
 0\\
 1\\
 -1 
\end{array}
\right].
\end{equation}
Extend $\lambda_0$ at all levels along
$\phi_1$. That is, define  
\begin{equation}\label{eq:lambdam}
\lambda_m=\phi_1(\lambda_{m-1})\,\text{ for all }\,m\ge 1
\end{equation}
and define 
\begin{equation}\label{eq:lambda}
\lambda^{(2)} = \lim_{m\rightarrow\infty} 15^m \lambda_m.
\end{equation}
We extend $u_1,u_2$ and $u_3$ to eigenvectors of $\lambda^{(2)}$ on
the Vicsek set via the spectral decimation. 
Figure
\ref{fig:extu1} presents the extension of $u_1$ from $V_0$ to 
$V_2$ using the spectral decimation described above. 
\begin{figure}[h]
  \centering
  \includegraphics[scale=0.6]{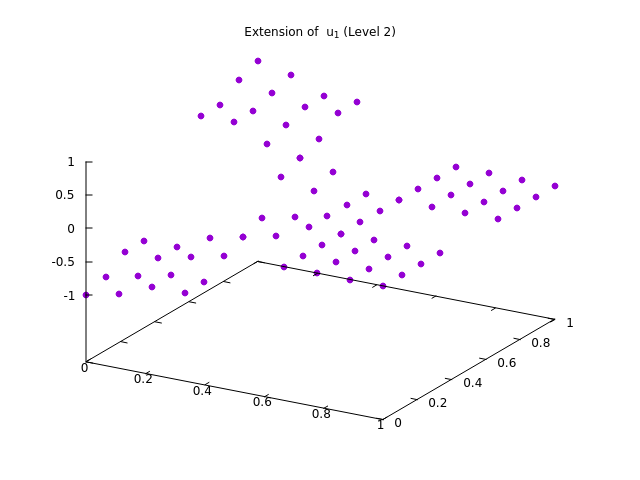}
  \caption{Extension of $u_1$ to $V_2$.}
  \label{fig:extu1}
\end{figure}
The
following important fact  follows from \cite[Theorem 2.2]{ConStrWhe}.

\begin{prop}
 $\lambda^{(2)}$ is the 2nd smallest eigenvalue of the Neumann Laplacian on the Vicsek set.
\end{prop}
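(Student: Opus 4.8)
The plan is to combine the complete list of Neumann eigenvalues from the decimation rules (1)--(4) with a single monotonicity principle, showing that $\lambda^{(2)}$ is the immediate successor of the bottom eigenvalue. The bottom eigenvalue is $0$, the $0$-series value of the constant word $\omega\equiv1$: since $R(0)=0$ forces $\phi_1(0)=0$, every truncation $15^{m}\phi_1^{\circ m}(0)$ vanishes. By rule (1) every other Neumann eigenvalue is a nonzero $0$-series value or a $4/3$-series value born on some level $k\ge 0$, so it will suffice to show that each of these strictly exceeds $\lambda^{(2)}$.

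We would route all comparisons through the auxiliary function
\[
L(y)=\lim_{k\to\infty}15^{k}\,\phi_1^{\circ k}(y),\qquad 0\le y<R(\lambda_c),
\]
where $\lambda_c=\tfrac{8-\sqrt{19}}{18}<\lambda_c'=\tfrac{8+\sqrt{19}}{18}$ are the critical points of $R$ (one checks $R(\lambda_c)>4/3$, so $4/3$ lies in the domain). Since $R(y)>y$ on $(0,\lambda_c)$, the branch $\phi_1$ is a strictly increasing contraction toward $0$ on $(0,R(\lambda_c))$, so by the convergence proved in \cite{Zhou} the function $L$ exists, is strictly increasing, has $L(0)=0$, and satisfies the renewal identity $L\bigl(R(\mu)\bigr)=15\,L(\mu)$ for $\mu\in(0,\lambda_c)$, equivalently $L(\nu)=15\,L(\phi_1(\nu))$. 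By construction $\lambda^{(2)}=L(4/3)$, and peeling one branch yields the single identity that will drive everything,
\[
\lambda^{(2)}=15\,L\bigl(\phi_1(4/3)\bigr),\qquad \phi_1(4/3)<\lambda_c .
\]
Moreover, once a decimation word is eventually $1$, its eigenvalue collapses to $15^{\,k+N}L(\mu_{N})$, where $N$ is the index of the last non-$1$ letter, $\mu_{N}$ is the decimated value there, and $k=0$ for the $0$-series.

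With these reductions the surviving eigenvalues fall into three bands that I would treat in turn. If $N=1$, rule (3) forces the $0$-series word $(3,1,1,\dots)$, which injects $\phi_3(0)=5/6$ and has value $15\,L(5/6)$; since $\phi_1(4/3)<\lambda_c<5/6$ and $L$ increases, $\lambda^{(2)}=15\,L(\phi_1(4/3))<15\,L(5/6)$. If $N\ge 2$, which covers every remaining nonzero $0$-series value and every level-$0$ $4/3$-series value carrying a non-$1$ letter (there $\omega_1=1$), the last non-$1$ letter is $2$ or $3$, so $\mu_{N}$ lies in $(\lambda_c,\lambda_c')$ or $(\lambda_c',1)$; in particular $\mu_N>\lambda_c$, and the value is at least $15^{2}L(\mu_N)>15^{2}L(\lambda_c)>15\,L(\phi_1(4/3))=\lambda^{(2)}$. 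Finally, a level-$k$ $4/3$-series value with $k\ge 1$ equals $15^{k}$ times the corresponding level-$0$ $4/3$-series value; the latter is $\ge L(4/3)$ (it equals $\lambda^{(2)}$ for the all-$1$ word and exceeds it otherwise), so the value is at least $15\,L(4/3)>\lambda^{(2)}$. Combining the bands shows $\lambda^{(2)}$ is the smallest positive Neumann eigenvalue; rule (4) then assigns it multiplicity $2\cdot 5^{0}+1=3$, matching the eigenspace spanned by $u_1,u_2,u_3$.

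The hard part will be the $N\ge 2$ band. Because $\phi_2$ is \emph{decreasing}, the eigenvalue is not entrywise monotone in the word, so one cannot simply claim that larger letters give larger eigenvalues. The estimate will instead use only that the output of the \emph{last} non-$1$ letter lands in $\operatorname{ran}\phi_2\cup\operatorname{ran}\phi_3\subset(\lambda_c,1)$, placing $\mu_N$ above the threshold $\lambda_c$ no matter what the earlier letters were; the ordering $\phi_1<\phi_2<\phi_3$ and the explicit branch ranges are exactly what secure this. Once $\mu_N>\lambda_c$ is established, the peeled identity $\lambda^{(2)}=15\,L(\phi_1(4/3))$ with $\phi_1(4/3)<\lambda_c$ will close every estimate uniformly.
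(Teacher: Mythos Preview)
Your argument is correct and in fact supplies more than the paper does: the paper does not prove this proposition but simply records it as a consequence of \cite[Theorem~2.2]{ConStrWhe}. You instead give a self-contained ordering argument built on the Koenigs-type linearization $L(y)=\lim_{k}15^{k}\phi_1^{\circ k}(y)$, using its strict monotonicity together with the renewal identity $L(R(\mu))=15\,L(\mu)$ to compare every admissible decimation word against the all-$1$ level-$0$ $4/3$-series word. The whole comparison collapses to the single numerical fact $\phi_1(4/3)=1/6<\lambda_c$, after which the three bands of competitors ($N=1$ in the $0$-series, $N\ge 2$ in either series, and higher-level $4/3$-series) are dispatched uniformly. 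What the paper's citation buys is brevity; what your route buys is an explicit, transparent mechanism that never leaves the decimation rules already stated in the section.

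One minor imprecision worth noting: $\phi_1$ is not a Lipschitz contraction on all of $(0,R(\lambda_c))$, since $R'(\lambda_c)=0$ forces $\phi_1'(y)\to\infty$ near the right endpoint. However, you only use that $\phi_1^{\circ k}(y)\to 0$, and this follows from the inequality $R(\lambda)>\lambda$ on $(0,\lambda_c)$ (equivalently $\phi_1(y)<y$ on $(0,R(\lambda_c))$), so the argument is unaffected. Likewise, the strict monotonicity of $L$ is best justified not by a contraction claim but by the functional equation together with $L'(0)=1$, which reduces $L(y_1)>L(y_2)$ to the same inequality for $\phi_1^{\circ k}(y_1)>\phi_1^{\circ k}(y_2)$ with both arguments near $0$.
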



An important fact later in the paper is that $\lambda_1 =\phi_1(4/3)=
1/6$. One can easily check that $R(1/6)=4/3$. The fact that $1/6$ is
the smallest preimage of $4/3$ under $R(\lambda)$ follows from
numerically solving the equation $R(\lambda)=4/3$. For future
reference, using Maxima
approximation, we find  that $\lambda^{(2)}$ is roughly 
\[
2.601813889315113780749839. 
\]



 
 
 
 
 
 
 
 
 
 
 
 
\section{The "Hot Spots" conjecture for the Vicsek Set}
\label{sec:main}
We are now able to state the main theorem. The proof of the theorem
follows from the  main lemma
\ref{lem:main}.  The proof of this lemma is long and technical,  and  it will occupy the
entirety of the next section. 
Our main result states that every
eigenfunction of the second smallest eigenvalue of the Neumann
Laplacian on the Vicsek set attains its maximum and minimum on the
boundary. Our approach is based on techniques from \cite{Ruan_SG} and
\cite{Ruan_SG3}. Our computations, however, are much more involved due
to the  complexity of    the spectral decimation matrix \eqref{eq:matrix}.  

\begin{thm}[Main Theorem]\label{thm:main}
Let $VS_2$ be the Vicsek set and $\Delta$ be the Neumann Laplacian as
described in the previous section. Then every eigenfunction of the
second smallest eigenvalue $\lambda^{(2)}$ of $\Delta$ attains its minimum and maximum
value on the boundary $V_0$. 
\end{thm}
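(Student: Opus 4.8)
The plan is to reduce the Main Theorem to a statement about the three basis eigenfunctions and then to a finite, combinatorial check at low levels of graph approximation. Since $\lambda^{(2)}$ is the second smallest Neumann eigenvalue, its eigenspace is three-dimensional and spanned by the extensions of $u_1,u_2,u_3$ from \eqref{eq:bases}. An arbitrary eigenfunction $u$ is a linear combination $u=c_1u_1+c_2u_2+c_3u_3$, so I would first argue that it suffices to control the location of extrema for this whole family simultaneously. This is exactly the role I expect Lemma \ref{lem:main} to play: following the strategy of \cite{Ruan_SG,Ruan_SG3}, the lemma should assert that on every graph cell the maximum and minimum of each such $u$ over the cell are attained at the cell's boundary vertices (the images of $V_0$), so that extrema can never be created in the interior of a cell.

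Granting the lemma, the proof proceeds by an induction/propagation argument on the level $m$. First I would invoke density of $\bigcup_m V_m$ in $VS_2$ and continuity of $u$ to reduce finding $\max u$ and $\min u$ on the fractal to finding them over the vertices $\bigcup_m V_m$. Then, starting from a maximizing vertex at some level $m$, I would use Lemma \ref{lem:main} cell-by-cell: the maximum over any $m$-cell is attained on that cell's boundary vertices, which lie in a coarser set, and iterating this ``push the extremum to the cell boundary'' step walks the location of the maximum down through the levels until it lands in $V_0$. The same argument applied to $-u$ handles the minimum. This is the argument the excerpt attributes to \cite{Ruan_SG}, and it should be short once the lemma is in hand.

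The genuinely hard part is Lemma \ref{lem:main} itself, which the authors flag as occupying the entirety of Section \ref{sec:proof}. The obstacle is that controlling extrema cell-by-cell requires understanding how the spectral decimation matrix \eqref{eq:matrix} moves values: given the four boundary values of a cell, the five interior subcell values are explicit but complicated rational functions of $\lambda$ evaluated at the specific eigenvalue $\lambda^{(2)}$ (equivalently at the sequence $\lambda_m=\phi_1(\lambda_{m-1})$). I expect the key step to be establishing sign and monotonicity inequalities among the coefficients $a,b,c,d,\gamma$ from \eqref{eq:matrix}, evaluated at $\lambda_m$, so that each extended value is a controlled combination—ideally a convex-combination-like average or a one-sided bound—of the four boundary values. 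Because $\lambda_1=1/6$ and the $\lambda_m$ decrease along $\phi_1$, I would try to pin down the signs of these coefficients on the relevant range and push the inequalities through uniformly in $m$; the symmetry of $VS_2$ under the dihedral group permuting $q_1,\dots,q_4$ should cut the number of distinct cases one must check. This uniform-in-$m$ control over all three basis functions (and hence all their linear combinations) is where the ``long and technical'' computation lives.

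Where I expect the main obstacle is precisely in making the per-cell bound uniform over \emph{all} linear combinations $c_1u_1+c_2u_2+c_3u_3$ rather than for the three generators separately: a bound that holds for each $u_i$ need not survive arbitrary linear combinations, so the lemma likely must be phrased as a statement about the extremal values of the combined family, and verifying it may require examining the joint behavior of the extended values on a cell as the coefficients $(c_1,c_2,c_3)$ range over the sphere. I would therefore organize the computation around the extended value vectors on $V_1$ (and possibly $V_2$, as in Figure \ref{fig:extu1}), reduce to a finite set of inequalities via the exact coefficient formulas and the fixed numerical value of $\lambda^{(2)}$, and relegate the repetitive algebraic identities to the Appendix as the authors indicate.
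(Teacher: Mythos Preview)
Your overall architecture is right—reduce to Lemma~\ref{lem:main}, then use density of $\bigcup_m V_m$—but you have misidentified what the lemma actually asserts, and this is not a cosmetic difference: it is exactly the device that dissolves the obstacle you flag in your last paragraph.

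Lemma~\ref{lem:main} is \emph{not} a cell-by-cell ``extrema are on the cell boundary'' statement to be iterated down through levels. Instead, the paper defines four explicit affine combinations $f,g,h,k$ of $u_1,u_2,u_3$ (equations \eqref{eq:f}--\eqref{eq:k}) chosen so that $f+g+h+k\equiv 1$ and $f(\emptyset,i)=\delta_{1i}$, $g(\emptyset,i)=\delta_{2i}$, $h(\emptyset,i)=\delta_{3i}$, $k(\emptyset,i)=\delta_{4i}$. The lemma asserts that $0\le f,g,h,k\le 1$ everywhere. Once this is known, the deduction of the theorem is a one-liner: since $u_1=f-k$, $u_2=g-k$, $u_3=h-k$, any $u=c_1u_1+c_2u_2+c_3u_3$ satisfies
\[
u(q_{\omega,i})=c_1 f(\omega,i)+c_2 g(\omega,i)+c_3 h(\omega,i)+(-c_1-c_2-c_3)k(\omega,i),
\]
a \emph{convex combination} of the four numbers $c_1,c_2,c_3,-c_1-c_2-c_3$, which are precisely $u(q_1),u(q_2),u(q_3),u(q_4)$. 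So every value of $u$ lies between its boundary extremes, with no induction on $m$ and no per-cell propagation.

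This partition-of-unity trick is what makes the bound automatically uniform over all linear combinations $(c_1,c_2,c_3)$; your proposed approach of checking inequalities for the joint family as $(c_1,c_2,c_3)$ ranges over a sphere, or of iterating a cell-boundary maximum principle, is neither needed nor what the paper does. The long technical work in Section~\ref{sec:proof} is entirely devoted to proving $0\le f\le 1$ (the other three follow by symmetry), via explicit recursive formulas for $f$ along the spectral decimation and inequalities on the coefficients $\alpha_m,\beta_m,\chi_m,\delta_m$—closer in spirit to what you sketch, but for a single scalar function $f$ rather than for the whole eigenspace at once.
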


In order to prove this theorem, we begin by recalling the space
of finite words that the five iterated functions \eqref{eq:ifs} of the
Vicsek set generate. Let $\Sigma =\{1,2,3,4,5\}$ be the corresponding
alphabet and let $\Sigma^m = \{\omega_1\dots \omega_j\dots
\omega_m\,|\, j\in \{1,2,3,4,5\}\,  \}$ be the set of words of length
$m$. Note that in order to simplify the notation we do not separate
the letters in a word by commas. So, for example, we write $14$
for the word of length 2 that is formed with the letters 1 and
4. Define $\Sigma^*$ = $\bigcup_{m=0}^{\infty} \Sigma^m$ as the set 
of all finite words. For every $\omega\in\Sigma^m$, we write
$\vert\omega\vert := m$ for the length of $\omega$. Let $\emptyset$
denote the empty word and |$\emptyset$| = 0. Furthermore, $\Sigma^0$ =
$\{\emptyset\}$. If $\omega,\nu\in \Sigma^*$ then we write $\omega\nu$
for the word obtained by concatenating the two words $\omega$ and
$\nu$ together. In particular, we will create words by adding just one
letter. For example, $\omega i$ is the word that adds the letter $i$ at the end of
$\omega$. If all the letters of $\omega$ are the same, say equal to
$i\in\{1,2,3,4,5\}$, then we write $\omega$ as $[i]^m$, where $m$ is the length of
$\omega$. 

Recall that $V_0 = \{q_1, q_2, q_3, q_4\}$ where $F_i(q_i) = q_i$, $i=1,2,3,4$. We let
$\overline{\Sigma^*}=\Sigma^*\times \{1,2,3,4\}$. For $\omega$ $\in$ 
$\Sigma^m$, we write $F_{\omega} = F_{\omega_1}\circ F_{\omega_2}
\circ ... \circ F_{\omega_m}$ and for $(\omega,i)\in
\overline{\Sigma^*}$ we write $q_{\omega,i} = F_{\omega}(q_i)$. Notice
that $\{q_{\omega,i}\}_{(\omega,i)\in
  \overline{\Sigma^*}}=\bigcup_{m=0}^{\infty} V_m$ forms a dense subset
of $VS_2$.  

Recall from \eqref{eq:lambda} that
$\lambda^{(2)}=\lim_{m\to\infty}\lambda_m$ is the second smallest
eigenvalue of the Neumann Laplacian where $\lambda_m$ are defined in
\eqref{eq:lambdam}. Recall also that $\lambda_0=4/3$, $\lambda_1=1/6$, and
they are related   via $\lambda_m=R(\lambda_{m+1})$ for all $m\ge 0$,
where $R(\lambda)$ is the eigenvalue extension function
\eqref{eq:R}. We let $EF_2$ be the the eigenspace of 
$\lambda^{(2)}$. Then $EF_2$ is a three-dimensional vector space by
the spectral decimation and we pick the bases given by $u_1$, $u_2$,
$u_3$ $\in$ $EF_2$ with $u_1(q_1) = 1$, $u_1(q_2) = 0$, $u_1(q_3) =
0$, $u_1(q_4) = -1$, $u_2(q_1) = 0$, $u_2(q_2) = 1$, $u_2(q_3) = 0$,
$u_2(q_4) = -1$, $u_3(q_1) = 0$, $u_3(q_2) = 0$, $u_3(q_3) = 1$,
$u_3(q_4) = -1$. That is, we pick $u_1,\,u_2,\,u_3$ to be the
extensions via the spectral decimation of the $4/3$-eigenvectors on
$V_0$ defined in \eqref{eq:bases}. 

Following some ideas from \cite{Ruan_SG}, we use the bases that we
picked to define a partition of unity on
$\overline{\Sigma^*}$. Specifically, we define the functions $f, g,
h,$ and $k$ with domain $\overline{\Sigma^*}$ via:
\begin{align}
f(\omega,i) &= \frac34u_1(q_{\omega,i})-\frac14u_2(q_{\omega,i})-\frac14u_3(q_{\omega,i})+\frac14\label{eq:f}\\
g(\omega,i) &= -\frac14u_1(q_{\omega,i})+\frac34u_2(q_{\omega,i})-\frac14u_3(q_{\omega,i})+\frac14\label{eq:g}\\
h(\omega,i) &= -\frac14u_1(q_{\omega,i})-\frac14u_2(q_{\omega,i})+\frac34u_3(q_{\omega,i})+\frac14\label{eq:h}\\
k(\omega,i) &= -\frac14u_1(q_{\omega,i})-\frac14u_2(q_{\omega,i})-\frac14u_3(q_{\omega,i})+\frac14\label{eq:k}
\end{align}
Then they satisfy the following crucial lemma.
\begin{lem}\label{lem:main}

We have $0\le f(\omega,i), g(\omega,i),h(\omega,i),k(\omega,i) \le 1$
and $f(\omega,i)+g(\omega,i)+h(\omega,i)+k(\omega,i) = 1$ for every
$\omega \in  \Sigma^*$ and for every $i \in
\{1,2,3,4\}$. Additionally, $f(\emptyset,i)=\delta_{1i}$,
$g(\emptyset,i)=\delta_{2i}$, $h(\emptyset,i)=\delta_{3i}$,
$k(\emptyset,i)=\delta_{4i}$ where $\delta_{ij}$ is the Kronecker-Delta
function. 
\end{lem}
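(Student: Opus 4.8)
I would first dispose of the two easy claims. The identity $f+g+h+k=1$ holds pointwise on all of $\overline{\Sigma^*}$ by a one-line check on \eqref{eq:f}--\eqref{eq:k}: the coefficient of each $u_i(q_{\omega,i})$ is $\tfrac34-\tfrac14-\tfrac14-\tfrac14=0$ (and $0$ for the other two basis functions), while the four constants $\tfrac14$ add to $1$. Since this identity forces each of the four functions to be $\le 1$ as soon as all of them are $\ge 0$, the entire lemma reduces to proving nonnegativity. The base case $|\omega|=0$ is then a direct evaluation: at $q_{\emptyset,i}=q_i$ the values of $u_1,u_2,u_3$ are the columns of \eqref{eq:bases}, and substituting gives $(f,g,h,k)(\emptyset,i)=(\delta_{1i},\delta_{2i},\delta_{3i},\delta_{4i})$, which is nonnegative and records exactly the claimed Kronecker-delta values.

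For the inductive step I would fix a cell $F_\omega(\Gamma_0)$ with $|\omega|=m$ and pass to its five subcells via the decimation matrix \eqref{eq:matrix}, evaluated at $\lambda=\lambda_{m+1}$. Inverting the defining relations \eqref{eq:f}--\eqref{eq:k} yields $u_1=f-k$, $u_2=g-k$, $u_3=h-k$, so the three eigenfunctions and the constant $1=f+g+h+k$ are all recoverable from the corner data $(f,g,h,k)(q_{\omega,l})$, $l=1,\dots,4$. Applying \eqref{eq:matrix} to each eigenfunction and reassembling produces a single transfer identity valid simultaneously for every $\phi\in\{f,g,h,k\}$: if the new vertex $q_{\omega i,j}$ corresponds to the column with coefficient vector $(c_1,c_2,c_3,c_4)\in\{a,b,c,d\}^4$ and $S=\sum_l c_l$, then
\[
\phi(q_{\omega i,j})=\gamma\sum_{l=1}^4 c_l\,\phi(q_{\omega,l})+\tfrac14\bigl(1-\gamma S\bigr),
\]
with $a,b,c,d,\gamma$ evaluated at $\lambda_{m+1}$. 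That the same weights govern all four functions is precisely what makes the partition-of-unity propagate.

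The sign analysis then runs as follows. By \eqref{eq:lambdam} every decimation parameter $\lambda_m$, $m\ge1$, lies in $(0,\tfrac16]$, and on this interval one checks by factoring that $\gamma=\tfrac{1}{3(1-2\lambda)f_2(\lambda)}>0$ and $a,b,c,d>0$, so each multiplicative weight $\gamma c_l$ is nonnegative. The obstruction is the additive constant: the only column sums that occur are $a+3c=6(2-3\lambda)(1-2\lambda)$ and $b+3d=3(4-3\lambda)(1-2\lambda)$, whence
\[
1-\gamma(a+3c)=\frac{3\lambda(6\lambda-5)}{f_2(\lambda)}<0,\qquad 1-\gamma(b+3d)=\frac{18\lambda(\lambda-1)}{f_2(\lambda)}<0
\]
for $\lambda\in(0,\tfrac16]$. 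Thus the constant term is strictly negative, and nonnegativity of $\phi(q_{\omega i,j})$ cannot be read off from $\phi(q_{\omega,l})\ge0$ alone; it is equivalent to the family of lower bounds $\gamma\sum_l c_l\,\phi(q_{\omega,l})\ge\tfrac14(\gamma S-1)$, one for each column and each $\phi$.

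The main obstacle is exactly these lower bounds, which are false for an arbitrary nonnegative corner configuration summing to $1$ (one may place all of $\phi$'s mass on the corners carrying the small weights), so the induction must be strengthened to record the precise configuration of $(f,g,h,k)$ on the four corners of every cell rather than merely its nonnegativity. My plan is to exploit the dihedral symmetry of the square, which permutes $q_1,\dots,q_4$ and hence $f,g,h,k$, to reduce the twelve columns and four functions to a handful of representatives, and then to use the explicit decimation formulas collected in the Appendix together with the monotonicity of $a,b,c,d,\gamma$ in $\lambda$ on $(0,\tfrac16]$ to show that each admissible corner-pattern is carried by \eqref{eq:matrix} to nonnegative values at all twelve new vertices, while the five subcells inherit patterns from the same admissible family. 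Identifying this invariant family and verifying the lower bounds across every symmetry-reduced case is the long computation; the strictly negative additive constant, which rules out any soft convexity argument, is the crux and the reason the case analysis cannot be avoided.
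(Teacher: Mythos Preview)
Your proposal is correct and follows essentially the same route as the paper: derive the common transfer rule for all four functions (the paper's Lemma~\ref{lem:fw}), observe that the additive term $\tfrac14(1-\gamma S)$ is negative so naive convexity fails, reduce by the dihedral symmetries (Remark~\ref{rem:rot}), and then run a strengthened induction that tracks the actual corner configurations rather than mere nonnegativity. The paper makes your ``invariant family'' concrete by first computing closed-form values of $f$ along the rays $[1]^m$, $[2]^m$, $[5]^m$ (Lemmas~\ref{lem:1m}--\ref{lem:5m}) and then using these as the benchmark values in the case analysis of Proposition~\ref{prop:maxf}, which is exactly the long computation you anticipated.
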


As mentioned at the beginning of the section, the proof of the lemma
is long, technical, and will occupy the next section of this
paper. From the lemma, the theorem is proven as follows. 

\begin{proof}[Proof of Theorem \ref{thm:main}]
  Notice that the functions $f, g, h,$ and $k$ are related as follows:
  \begin{eqnarray*}
    f(\omega,i)-k(\omega,i)&=&u_1(q_{\omega,i}),\\
    g(\omega,i)-k(\omega,i)&=&u_2(q_{\omega,i}),\\
    h(\omega,i)-k(\omega,i)&=&u_3(q_{\omega,i}).
  \end{eqnarray*}
 Let $u \in EF_2$. Then
there exist constants $c_1, c_2, c_3$ such that $u(x) =
c_1u_1(x)+c_2u_2(x)+c_3u_3(x)$ for all $x \in VS_2$ because
$u_1,u_2,u_3$ form a basis for $EF_2$. It follows that 
\begin{equation*}
u(q_{\omega,i}) = c_1f(\omega,i)+c_2g(q\omega,i)+c_3h(\omega,i)+(-c_1-c_2-c_3)k(\omega,i).
\end{equation*}
Lemma $\ref{lem:main}$ implies that the maximum/minimum value of
$u$ on $\bigcup_{m=0}^{\infty} V_m$ is given by the maximum/minimum of
the values $c_1, c_2, c_3$ and $(-c_1-c_2-c_3)$, which are the values
of $u$ on the boundary $V_0$. Since $\bigcup_{m=0}^{\infty} V_m$ is
dense in $VS_2$ and $u$ is continuous the theorem follows. 
\end{proof}

\section{Proof of Lemma \ref{lem:main}}
\label{sec:proof}

The proof of Lemma \ref{lem:main} will occupy the rest of this
paper. 
We provide
first a short summary of the proof in order to help the reader
navigate  through the many lemmas that follow. First, an easy proof
shows that $f(\emptyset,i)=\delta_{1i}$ (Lemma \ref{lem:deltai}). We observe that it
suffices to prove the statement for $f$  for the words $\omega$
that begin only with the letters $1,2$, and $5$, because $f$ restricted to
words that begin with $3$ and $4$ equals a ``rotation'' of $f$
restricted to words that begin with $2$ (see Remark
\ref{rem:rot}). The crucial Lemma \ref{lem:fw} proves recursive formulas  for
$f(\omega,j)$ when one increases the length of $\omega$ by 1. Using
these formulas we are able to compute explicitly $f(\omega,j)$ for words of
the form $\omega=[1]^m$ (Lemma \ref{lem:1m}), $\omega=[2]^m$ (Lemma
\ref{lem:2m}), and $\omega=[5]^m$ (Lemma \ref{lem:5m}). Recall that we write
$[i]^m$ for the word of length $m$ consisting only on the letter $i$, i.e.
$[i]^m=ii\dots i$ ($m$-times). Using these explicit formulas, we prove
in Proposition \ref{prop:maxf} that the minimum of $f$ is 0
and the maximum of $f$ is 1. We finish the proof of 
Lemma \ref{lem:main} by describing how to recover the same results for
$g,h$, and $k$ from the results proved for $f$.
To improve the readability of this section, we leave the statements
and proofs of some useful formulas until the Appendix (see Lemma \ref{lem:formulas}).

We continue to use the notation described in the previous section. In
particular, $f,g,h$ and $k$ are the functions defined  in \eqref{eq:f}, \eqref{eq:g},
  \eqref{eq:h}, and \eqref{eq:k}. Recall  that we do not separate
  the letters in a word $\omega\in \Sigma^*$.  Therefore,
  $123$ is the word of length 3 that consists of the letters 1, 2 and
  3 (we read this word  ``one, two, three'', as opposed to ``one hundred
  twenty three''). This word will correspond to the
  composition $F_1\circ F_2\circ F_3$.

  We
break the proof of the main Lemma \ref{lem:main} into a series of 
lemmas. We begin with the easier part.

\begin{lem}\label{lem:deltai}
  $f(\emptyset,i)=\delta_{1i}$,
$g(\emptyset,i)=\delta_{2i}$, $h(\emptyset,i)=\delta_{3i}$,
$k(\emptyset,i)=\delta_{4i}$, where $\emptyset$ is the empty word and
$\delta_{ij}$ is the Kronecker-Delta function. 
\end{lem}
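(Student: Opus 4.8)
The entire content of this lemma is the remark that the empty word indexes the identity map. Indeed, $F_\emptyset$ is the empty composition and hence the identity on $\mathbb{R}^2$, so $q_{\emptyset,i}=F_\emptyset(q_i)=q_i$ lies on the boundary $V_0$ for every $i\in\{1,2,3,4\}$. Consequently, evaluating $f,g,h,k$ at $(\emptyset,i)$ reduces to substituting the prescribed boundary values of $u_1,u_2,u_3$ into the definitions \eqref{eq:f}--\eqref{eq:k}. The plan is therefore to record these boundary data as the four column vectors $v_i=\bigl(u_1(q_i),u_2(q_i),u_3(q_i),1\bigr)^T$, which by the construction of the $u_j$ in \eqref{eq:bases} are $v_1=(1,0,0,1)^T$, $v_2=(0,1,0,1)^T$, $v_3=(0,0,1,1)^T$, and $v_4=(-1,-1,-1,1)^T$.

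Next I would collect the coefficients in \eqref{eq:f}--\eqref{eq:k} into a single matrix
\[
A=\begin{pmatrix}
3/4 & -1/4 & -1/4 & 1/4\\
-1/4 & 3/4 & -1/4 & 1/4\\
-1/4 & -1/4 & 3/4 & 1/4\\
-1/4 & -1/4 & -1/4 & 1/4
\end{pmatrix},
\]
so that $\bigl(f(\emptyset,i),g(\emptyset,i),h(\emptyset,i),k(\emptyset,i)\bigr)^T=Av_i$. The four assertions $f(\emptyset,i)=\delta_{1i}$, $g(\emptyset,i)=\delta_{2i}$, $h(\emptyset,i)=\delta_{3i}$, $k(\emptyset,i)=\delta_{4i}$ are then exactly the statement that $Av_i=e_i$, the $i$-th standard basis vector. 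I would verify this by the four short matrix--vector products; for example $Av_1=(3/4+1/4,\,-1/4+1/4,\,-1/4+1/4,\,-1/4+1/4)^T=(1,0,0,0)^T=e_1$, and the cases $i=2,3,4$ are identical one-line computations (the case $i=4$ being the only one where all three of $u_1,u_2,u_3$ contribute, the four copies of $\pm 1/4$ combining to give $e_4$).

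There is essentially no obstacle here: once one observes $q_{\emptyset,i}=q_i$, the lemma is four elementary evaluations, which is why the authors isolate it as the easy part of Lemma \ref{lem:main}. The only point requiring care is the bookkeeping convention that the empty word corresponds to the identity map and hence lands on $V_0$, where $u_1,u_2,u_3$ take the known boundary values fixed in \eqref{eq:bases}; this is precisely what legitimizes the substitution.
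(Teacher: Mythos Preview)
Your proof is correct and follows the same approach as the paper: observe that $q_{\emptyset,i}=q_i$ and then substitute the boundary values of $u_1,u_2,u_3$ from \eqref{eq:bases} into the definitions \eqref{eq:f}--\eqref{eq:k}. The only difference is cosmetic---you package the four evaluations as a single matrix--vector product $Av_i=e_i$, whereas the paper writes out the four values $f(\emptyset,1),\dots,f(\emptyset,4)$ line by line and then appeals to symmetry for $g,h,k$.
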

\begin{proof}
From the definition of the functions, it is clear that
$f(\omega,i)+g(\omega,i)+h(\omega,i)+k(\omega,i) = 1$.  
Notice that
\begin{align*}
f(\emptyset,1) &= \frac34u_1(q_{1})-\frac14u_2(q_{1})-\frac14u_3(q_{1})+\frac14 = \frac34+\frac14 = 1\\
f(\emptyset,2) &= \frac34u_1(q_{2})-\frac14u_2(q_{2})-\frac14u_3(q_{2})+\frac14 = \frac14-\frac14 = 0\\
f(\emptyset,3) &= \frac34u_1(q_{3})-\frac14u_2(q_{3})-\frac14u_3(q_{3})+\frac14 = \frac14-\frac14 = 0\\
f(\emptyset,4) &= \frac34u_1(q_{4})-\frac14u_2(q_{4})-\frac14u_3(q_{4})+\frac14 = \frac14-\frac14 = 0.
\end{align*}
Hence, $f(\emptyset,i) = \delta_{1,i}$. Similar statements hold for
$g$ and $h$ by symmetry, and similar computation shows that the result
also holds for $k$.
\end{proof}

The hard part that remains is to prove that each of
these functions is between 0 and 1. 
First, we begin by proving that $f(\omega i,i) = f(\omega,i)$ and
describe recursive relations satisfied by $f$.  
\begin{eqnarray*}
f(\omega i,i) &=& \frac34 u_1(F_{\omega i}(q_i)) - \frac14 u_2(F_{\omega i}(q_i)) - \frac14 u_3(F_{\omega i}(q_i)) + \frac14\\
&=& \frac34 u_1(F_{\omega}(F_i(q_i))) - \frac14 u_2(F_{\omega}(F_i(q_i))) - \frac14 u_3(F_{\omega}(F_i(q_i))) + \frac14\\
&=& \frac34 u_1(F_{\omega}(q_i)) - \frac14 u_2(F_{\omega}(q_i)) -
    \frac14 u_3(F_{\omega}(q_i)) + \frac14\\
  &=& f(\omega, i)
\end{eqnarray*}
because $F(q_i)=q_i$. So $f(\omega i, i)$ = $f(\omega, i)$ for all $i\in\{1,2,3,4\}$.

For the rest of the paper we write $a_m$, $b_m$, $c_m$, $d_m$ for the elements of the extension
matrix  \eqref{eq:matrix} evaluated at $\lambda_m$ for all  $m\ge 0$
and 
\begin{eqnarray}
\alpha_m&:=&\gamma_ma_m=\frac{9-42\lambda_m+36\lambda_m^2}{3(4-29\lambda_m+60\lambda_m^2-36\lambda_m^3)}\label{eq:alpham}\\
\beta_m&:=&\gamma_mb_m=\frac{6(1-4\lambda_m+3\lambda_m^2)}{3(4-29\lambda_m+60\lambda_m^2-36\lambda_m^3)}\label{eq:betam}\\
\chi_m&:=&\gamma_mc_m=\frac{1}{3(4-29\lambda_m+60\lambda_m^2-36\lambda_m^3)}\label{eq:chim}\\
\delta_m&:=&\gamma_md_m=\frac{2-3\lambda_m}{3(4-29\lambda_m+60\lambda_m^2-36\lambda_m^3)}\label{eq:deltam}.
\end{eqnarray}

\begin{lem}\label{lem:fw}
  The following formulas hold for all $\omega\in \Sigma^*$\verb!:!
\begin{align}
f(\omega 1,2) = f(\omega 1,4) &=\alpha_{m+1}f(\omega,1) + \chi_{m+1}f(\omega,2) + \chi_{m+1}f(\omega,3) + \chi_{m+1}f(\omega,4)\label{eq:fw12}\\
&+ \frac14(1-\alpha_{m+1} -3\chi_{m+1}),\nonumber\\
f(\omega 2,1) = f(\omega 2,3) &= \chi_{m+1}f(\omega,1) + \alpha_{m+1}f(\omega,2) + \chi_{m+1}f(\omega,3) + \chi_{m+1}f(\omega,4)\label{eq:fw21}\\
&+ \frac14(1-\alpha_{m+1} -3\chi_{m+1}),\nonumber\\
f(\omega 3,2) = f(\omega 3,4) &= \chi_{m+1}f(\omega,1) + \chi_{m+1}f(\omega,2) + \alpha_{m+1}f(\omega,3) + \chi_{m+1}f(\omega,4)\label{eq:fw32}\\
&+ \frac14(1-\alpha_{m+1} -3\chi_{m+1}).\nonumber\\
f(\omega4,1)=f(\omega4,3)&= \chi_{m+1}f(\omega,1) + \chi_{m+1}f(\omega,2) + \chi_{m+1}f(\omega,3) + \alpha_{m+1}f(\omega,4)\label{eq:fw43}\\
&+ \frac14(1-\alpha_{m+1} -3\chi_{m+1}) \nonumber.\\
f(\omega5,1)=f(\omega 1,3) &= \beta_{m+1}f(\omega,1) + \delta_{m+1}f(\omega,2) + \delta_{m+1}f(\omega,3) + \delta_{m+1}f(\omega,4)\label{eq:fw13}\\
&+ \frac14(1-\beta_{m+1} -3\delta_{m+1}),\nonumber\\
f(\omega5,2)=f(\omega 2,4) &= \delta_{m+1}f(\omega,1) + \beta_{m+1}f(\omega,2) + \delta_{m+1}f(\omega,3) + \delta_{m+1}f(\omega,4)\label{eq:fw24}\\
&+ \frac14(1-\beta_{m+1} -3\delta_{m+1}),\nonumber\\
f(\omega5,3)=f(\omega 3,1) &= \delta_{m+1}f(\omega,1) + \delta_{m+1}f(\omega,2) + \beta_{m+1}f(\omega,3)+ \delta_{m+1}f(\omega,4)\label{eq:fw31}\\
& + \frac14(1-\beta_{m+1} -3\delta_{m+1}),\nonumber\\
f(\omega5,4)=f(\omega 4,2) &= \delta_{m+1}f(\omega,1) + \delta_{m+1}f(\omega,2) + \delta_{m+1}f(\omega,3) + \beta_{m+1}f(\omega,4)\label{eq:fw42}\\
&+ \frac14(1-\beta_{m+1} -3\delta_{m+1}),\nonumber
\end{align}
\end{lem}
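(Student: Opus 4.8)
The plan is to derive all eight identities from a single source: the spectral-decimation extension matrix \eqref{eq:matrix}, read off cell-by-cell. The key observation is that each eigenfunction $u_s$ ($s=1,2,3$) is, by construction, the spectral-decimation extension of its boundary values, so its restriction to any cell is governed by the self-similar structure. Concretely, fix $\omega\in\Sigma^m$. The cell $F_\omega$ is a level-$m$ cell, and passing to its five subcells $F_{\omega 1},\dots,F_{\omega 5}$ is exactly one decimation step, carried out at the eigenvalue $\lambda_{m+1}$ (recall $\lambda_m=R(\lambda_{m+1})$ and that we extend along $\phi_1$, so none of the $\lambda_{m+1}$ is forbidden and \eqref{eq:matrix} is well defined at $\lambda_{m+1}$). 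Thus, for each new vertex $F_\omega(q_\ell)$ with $\ell\in\{5,\dots,16\}$, the value $u_s(F_\omega(q_\ell))$ equals the corresponding column of \eqref{eq:matrix}, evaluated at $\lambda_{m+1}$, applied to the vector $(u_s(q_{\omega,1}),u_s(q_{\omega,2}),u_s(q_{\omega,3}),u_s(q_{\omega,4}))$.

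First I would make the geometric identification $F_{\omega i}(q_j)=F_\omega(F_i(q_j))$ explicit by locating each point $F_i(q_j)\in V_1$ among the labeled vertices $q_5,\dots,q_{16}$. For a corner cell $i\in\{1,2,3,4\}$ the fixed vertex gives $F_{\omega i}(q_i)=q_{\omega,i}$ (already handled, since $F_i(q_i)=q_i$), while the three remaining $F_i(q_j)$ are new vertices. The cross-cell coincidences producing the left-hand equalities in \eqref{eq:fw13}--\eqref{eq:fw42} are $F_1(q_3)=F_5(q_1)$, $F_2(q_4)=F_5(q_2)$, $F_3(q_1)=F_5(q_3)$, and $F_4(q_2)=F_5(q_4)$; applying $F_\omega$ shows that the two words in each such pair name the same point, hence the same $u_s$-value and the same $f$-value. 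The within-cell equalities in \eqref{eq:fw12}--\eqref{eq:fw43} (for instance $f(\omega1,2)=f(\omega1,4)$) come from the fact that the corresponding columns of \eqref{eq:matrix} coincide, so the two vertices obey identical extension formulas.

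Next I would carry out the algebraic assembly for one representative of each of the two types: $f(\omega1,2)$ via the column $(a,c,c,c)$, and $f(\omega5,1)=f(\omega1,3)$ via the column $(b,d,d,d)$. Applying the extension to each of $u_1,u_2,u_3$ and substituting into the definition \eqref{eq:f} of $f$, I would use the single collapsing identity
\[
\tfrac34 u_1(q_{\omega,j})-\tfrac14 u_2(q_{\omega,j})-\tfrac14 u_3(q_{\omega,j}) = f(\omega,j)-\tfrac14,
\]
together with the abbreviations $\alpha_{m+1},\beta_{m+1},\chi_{m+1},\delta_{m+1}$ from \eqref{eq:alpham}--\eqref{eq:deltam}. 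The linear part reproduces the stated combination of $f(\omega,1),\dots,f(\omega,4)$, and the four $-\tfrac14$ shifts combine with the leftover $+\tfrac14$ to yield exactly the constant $\tfrac14(1-\alpha_{m+1}-3\chi_{m+1})$ in the $a$/$c$ case and $\tfrac14(1-\beta_{m+1}-3\delta_{m+1})$ in the $b$/$d$ case. The remaining six formulas follow by the same computation applied to the appropriate column, permuting the roles of the four boundary vertices.

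The only real obstacle is bookkeeping: getting the vertex-to-column dictionary right for all twelve new vertices and confirming the four cross-cell coincidences, since a single mislabeling would scramble which $f(\omega,j)$ receives the $\alpha$ (or $\beta$) weight. Once that dictionary is pinned down and the collapsing identity is in hand, each of the eight identities is a mechanical one-line computation, and no estimation or convergence argument is required — the lemma is purely an exact, level-by-level recursion.
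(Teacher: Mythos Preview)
Your proposal is correct and follows essentially the same approach as the paper: apply the spectral-decimation extension matrix \eqref{eq:matrix} at $\lambda_{m+1}$ to each $u_s$ on the cell $F_\omega$, substitute into the definition \eqref{eq:f} of $f$, and collect terms; the within-cell equalities come from identical columns of \eqref{eq:matrix} and the cross-cell equalities from the coincidences $F_5(q_i)=F_i(q_{i+2\bmod 4})$. Your explicit ``collapsing identity'' $\tfrac34 u_1-\tfrac14 u_2-\tfrac14 u_3=f-\tfrac14$ is exactly what the paper's sample computation for $f(\omega 4,3)$ is doing implicitly, so the two arguments are the same.
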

\begin{proof}
  Let $\omega\in \Sigma^m$ for some $m\ge 0$. That is, $\vert \omega\vert=m$.
We prove  the case of $f(\omega 4,3)$ as follows:
\begin{eqnarray*}
f(\omega4,3) &=& \frac34 u_1(F_{\omega 4}(q_3)) - \frac14 u_2(F_{\omega 1}(q_3))- \frac14 u_3(F_{\omega 4}(q_3)) + \frac14
\\
&=& \frac34 [\chi_{m+1}u_1(F_{\omega}(q_1)) + \chi_{m+1}u_1(F_{\omega}(q_2)) + \chi_{m+1}u_1(F_{\omega}(q_3)) + \alpha_{m+1}u_1(F_{\omega}(q_4)) ]
\\
&&- \frac14 [\chi_{m+1}u_2(F_{\omega}(q_1)) + \chi_{m+1}u_2(F_{\omega}(q_2)) + \chi_{m+1}u_2(F_{\omega}(q_3)) + \alpha_{m+1}u_2(F_{\omega}(q_4)) ]
\\
&&-\frac14 [\chi_{m+1}u_3(F_{\omega}(q_1)) + \chi_{m+1}u_3(F_{\omega}(q_2)) + \chi_{m+1}u_3(F_{\omega}(q_3)) + \alpha_{m+1}u_3(F_{\omega}(q_4)) ]
\\
&&+ \frac14.
\end{eqnarray*}
Therefore,
\[
f(\omega4,3)= \chi_{m+1}f(\omega,1) + \chi_{m+1}f(\omega,2) +
\chi_{m+1}f(\omega,3) + \alpha_{m+1}f(\omega,4) +
\frac14(1-\alpha_{m+1} -3\chi_{m+1}).  
\]
The fact that $f(\omega4,1)=f(\omega4,3)$ follows from the fact that
the corresponding columns in the extension matrix \eqref{eq:matrix}
are identical. Therefore \eqref{eq:fw43} holds.
The remaining formulas follow by similar computation. The fact that
$f(\omega5,1)=f(\omega1,3)$, $f(\omega5,2)=f(\omega2,4)$,
$f(\omega5,3)=f(\omega3,1)$, and $f(\omega5,4)=f(\omega4,2)$ follows from  the fact that
$F_5(q_1)=F_1(q_3)$, $F_5(q_2)=F_2(q_4)$, $F_5(q_3)=F_3(q_1)$ and $F_5(q_4)=F_4(q_2)$.
\end{proof}

As an immediate consequence of Lemma \ref{lem:fw}, we obtain the following.
\begin{lem}\label{lem:w234}
  Let $\omega\in \Sigma^*$.
  \begin{enumerate}
  \item If $f(\omega,1)=f(\omega,2)=f(\omega,3)$ then
    \[
      f(\omega1,2)=f(\omega1,4)=f(\omega2,1)=f(\omega2,3)=f(\omega3,2)=f(\omega3,4)
    \]
    and $f(\omega1,3)=f(\omega2,4)=f(\omega3,1)$.
  \item If $f(\omega,1)=f(\omega,2)=f(\omega,4)$ then
    \[
      f(\omega1,2)=f(\omega1,4)=f(\omega2,1)=f(\omega2,3)=f(\omega4,1)=f(\omega4,3)
    \]
    and $f(\omega1,3)=f(\omega2,4)=f(\omega4,2)$.
  \item  If $f(\omega,2)=f(\omega,3)=f(\omega,4)$,
    then
    \[
      f(\omega2,1)=f(\omega2,3)=f(\omega3,2)=f(\omega3,4)=f(\omega4,1)=f(\omega4,3)
    \]
    and $f(\omega2,4)=f(\omega3,1)=f(\omega4,2)$.
  \end{enumerate}
\end{lem}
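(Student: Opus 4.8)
The plan is to prove each of the three cases by direct substitution into the recursive formulas of Lemma \ref{lem:fw}, exploiting the symmetry of the coefficient patterns that appear there. The key structural observation is that the eight formulas split into two families. The formulas \eqref{eq:fw12}, \eqref{eq:fw21}, \eqref{eq:fw32}, \eqref{eq:fw43} each express the indicated value as a linear combination in which the coefficient $\alpha_{m+1}$ multiplies exactly one of $f(\omega,1),\dots,f(\omega,4)$ while $\chi_{m+1}$ multiplies the other three, plus the common constant $\tfrac14(1-\alpha_{m+1}-3\chi_{m+1})$. Likewise \eqref{eq:fw13}, \eqref{eq:fw24}, \eqref{eq:fw31}, \eqref{eq:fw42} place $\beta_{m+1}$ on one index and $\delta_{m+1}$ on the other three, plus $\tfrac14(1-\beta_{m+1}-3\delta_{m+1})$. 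Thus each such value depends on the four numbers $f(\omega,i)$ only through which single index receives the distinguished coefficient.

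For the first assertion of (1), I would set $t:=f(\omega,1)=f(\omega,2)=f(\omega,3)$ and $s:=f(\omega,4)$ and substitute into \eqref{eq:fw12}, \eqref{eq:fw21}, and \eqref{eq:fw32}. In each of these the distinguished coefficient $\alpha_{m+1}$ lands on one of the three equal values, so every expression collapses to $(\alpha_{m+1}+2\chi_{m+1})t+\chi_{m+1}s+\tfrac14(1-\alpha_{m+1}-3\chi_{m+1})$, independent of which of the first three indices was distinguished. This yields $f(\omega1,2)=f(\omega2,1)=f(\omega3,2)$, and the pairings already recorded in Lemma \ref{lem:fw} (such as $f(\omega1,2)=f(\omega1,4)$) supply the remaining equalities in the displayed chain. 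For the second assertion of (1), the same substitution into \eqref{eq:fw13}, \eqref{eq:fw24}, and \eqref{eq:fw31} produces the common value $(\beta_{m+1}+2\delta_{m+1})t+\delta_{m+1}s+\tfrac14(1-\beta_{m+1}-3\delta_{m+1})$, whence $f(\omega1,3)=f(\omega2,4)=f(\omega3,1)$.

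Cases (2) and (3) are handled identically, the only change being the bookkeeping of which three formulas are relevant. For (2), with the three equal coordinates indexed by $\{1,2,4\}$, I would use \eqref{eq:fw12}, \eqref{eq:fw21}, \eqref{eq:fw43} for the first assertion and \eqref{eq:fw13}, \eqref{eq:fw24}, \eqref{eq:fw42} for the second; for (3), indexed by $\{2,3,4\}$, I would use \eqref{eq:fw21}, \eqref{eq:fw32}, \eqref{eq:fw43} and then \eqref{eq:fw24}, \eqref{eq:fw31}, \eqref{eq:fw42}. In every instance the distinguished coefficient sits on one of the three equal values, and the selected expressions again collapse to a single common value of the form $(\alpha_{m+1}+2\chi_{m+1})t+\chi_{m+1}s+\cdots$ or $(\beta_{m+1}+2\delta_{m+1})t+\delta_{m+1}s+\cdots$.

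There is no genuine obstacle here, which is exactly why the lemma is labelled an immediate consequence: the verification is mechanical once the symmetry of the coefficient patterns is noticed. The only point requiring a little care is selecting, in each of the three cases, precisely the subset of formulas from Lemma \ref{lem:fw} whose distinguished index falls among the three coordinates assumed equal; getting this bookkeeping right is what forces the relevant expressions to collapse to a common value, and it is worth stating explicitly rather than leaving to the reader.
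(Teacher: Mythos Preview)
Your proposal is correct and is exactly the mechanical verification the paper has in mind: the authors give no proof beyond the line ``As an immediate consequence of Lemma~\ref{lem:fw}, we obtain the following,'' and your argument simply writes out that consequence by substituting three equal values into the relevant formulas and observing that the position of the distinguished coefficient $\alpha_{m+1}$ (respectively $\beta_{m+1}$) is immaterial when it lands among the equal coordinates.
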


\begin{rem}\label{rem:rot}
 Using Lemma \ref{lem:fw} and Lemma \ref{lem:w234}, one can prove
 inductively the following symmetries (``rotations'') of the function
 $f$:
 \begin{enumerate}
 \item Let $R_1:\Sigma\to\Sigma$ be the permutation that flips 2 and
   4, $R_1=(2,4)$. We denote
   by $R_1$ its extension to $\Sigma^*$ as well. Then
   $f(\omega,i)=f(R_1(\omega),R_1(i))$ for all
   $(\omega,i)\in\overline{\Sigma^*}$.
 \item Let $R_2:\Sigma\to\Sigma$ be the permutation defined by $R_2=(1,2,3,4)$.
 Then
   $f(2\omega,i)=f(3R_2(\omega),R_2(i))$ for all $\omega\in \Sigma^*$
   and $i\in\{1,2,3,4\}$.
 \end{enumerate}
 We call $R_1$ and $R_2$ ``rotations'' since, if we view $f$ defined
 on $VS_2$ via the  projection  $\pi:\overline{\Sigma^*}\to VS_2$
 defined by $\pi(\omega,i):=q_{\omega,i}$, then $R_1$ flips
 $\cup_{m\ge 0}V_m$ around
 the diagonal going from $q_1$ to $q_3$, and $R_2$ rotates the  2-cell
 $F_2(\cup_{m\ge0}V_m)$ by $90^\circ$ and moves it into the  2-cell
 $F_3(\cup_{m\ge0}V_m)$.

 There is a permutation that rotates $F_2(\cup_{m\ge0}V_m)$ and moves
 it into the  2-cell $F_4(\cup_{m\ge0}V_m)$.
 As a consequence, in the following we will only consider words
 $\omega\in \Sigma^*$ whose first letter is either $1,2$ or $5$.
\end{rem}

 We begin by proving that 0 $\le f([1]^m,j), f([2]^m,j),
f([5]^m,j) \le 1$ for all $j\in \{1,2,3,4 \}$. 
We have already shown that $f([1]^m,1) = 1$ for all $m\ge0$, so we now
prove that 0 $\le f([1]^m,j)\le 1$ for all $j\in \{2,3,4\}$. We
accomplish this by determining explicit formulas for $f([1]^m,j)$ in
the following lemma. 

\begin{lem}\label{lem:1m} We have  $f([1]^m,2)=f([1]^m,4)=1$ and
  $f([1]^m,3)=1-9\frac{\lambda_m}{4}$ for all $m\ge 1$. Therefore $0\le
  f([1]^m,j)\le 1$ for all $j\in \{2,3,4\}$ since $0<\lambda_m\le 1/6$
  for all $m\ge 1$, and $\lim_{m\to\infty}f([1]^m,j)=1$ for all $j\in\{1,2,3,4\}$.
\end{lem}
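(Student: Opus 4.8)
The plan is to prove the three identities $f([1]^m,2)=f([1]^m,4)=1$ and $f([1]^m,3)=1-\tfrac{9\lambda_m}{4}$ simultaneously by induction on $m\ge 1$, letting the recursion of Lemma \ref{lem:fw} do the structural work and reducing the inductive step to two scalar identities in $\lambda_m$ and $\lambda_{m+1}$. Throughout I use the already established fact that $f([1]^m,1)=1$ for every $m\ge 0$, together with the boundary values from Lemma \ref{lem:deltai}.

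For the base case $m=1$ I would apply \eqref{eq:fw12} and \eqref{eq:fw13} with $\omega=\emptyset$, substituting $f(\emptyset,1)=1$ and $f(\emptyset,2)=f(\emptyset,3)=f(\emptyset,4)=0$. Since $\lambda_1=1/6$, the coefficients \eqref{eq:alpham}--\eqref{eq:deltam} evaluate to $\alpha_1=3/2$, $\chi_1=1/2$, $\beta_1=5/4$, $\delta_1=3/4$, and direct substitution gives $f(1,2)=f(1,4)=1$ and $f(1,3)=5/8=1-\tfrac{9}{4}\cdot\tfrac16$, as claimed.

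For the inductive step, assuming the three formulas at level $m\ge 1$, I apply \eqref{eq:fw12} with $\omega=[1]^m$ to compute $f([1]^{m+1},2)=f([1]^{m+1},4)$ and \eqref{eq:fw13} with $\omega=[1]^m$ to compute $f([1]^{m+1},3)=f([1]^m1,3)$. Feeding in $f([1]^m,1)=f([1]^m,2)=f([1]^m,4)=1$ and $f([1]^m,3)=1-\tfrac{9\lambda_m}{4}$, the sum of the three non-fixed entries collapses to $3-\tfrac{9\lambda_m}{4}$, and after collecting terms each right-hand side reduces to the requirement
\[
\alpha_{m+1}+3(1-\lambda_m)\chi_{m+1}=1,\qquad \beta_{m+1}+3(1-\lambda_m)\delta_{m+1}=1-3\lambda_{m+1}.
\]
Substituting $\lambda_m=R(\lambda_{m+1})=36\lambda_{m+1}^3-48\lambda_{m+1}^2+15\lambda_{m+1}$ and the definitions \eqref{eq:alpham}--\eqref{eq:deltam}, both relations share the common denominator $3(4-29\lambda_{m+1}+60\lambda_{m+1}^2-36\lambda_{m+1}^3)$, so after clearing it each becomes a polynomial identity in $\lambda_{m+1}$ verified by expansion. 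These are exactly the scalar relations I would record in the Appendix (Lemma \ref{lem:formulas}) and invoke here rather than expand inline; verifying them is the only genuine obstacle, everything else being bookkeeping driven by the recursion. I expect the main subtlety to be keeping the level indices aligned, since \eqref{eq:fw12}--\eqref{eq:fw13} raise the length of $\omega$ from $m$ to $m+1$ and thus use the coefficients at level $m+1$ while the inductive hypothesis supplies the values at level $m$.

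Finally, the bounds and the limit follow at once. Because $0<\lambda_m\le 1/6$ for $m\ge 1$, we get $\tfrac58\le 1-\tfrac{9\lambda_m}{4}<1$, so $0\le f([1]^m,3)\le 1$, while $f([1]^m,2)=f([1]^m,4)=1$ lie trivially in range. Since $\lambda^{(2)}=\lim_{m}15^m\lambda_m$ is finite we have $\lambda_m\to 0$, whence $f([1]^m,3)=1-\tfrac{9\lambda_m}{4}\to 1$ and $f([1]^m,j)\to 1$ for every $j\in\{1,2,3,4\}$, completing the statement.
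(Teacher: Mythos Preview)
Your proposal is correct and follows essentially the same route as the paper: both argue by induction on $m$, feed the inductive values into the recursion formulas \eqref{eq:fw12} and \eqref{eq:fw13}, and reduce the step to polynomial identities in $\lambda_{m+1}$ via $\lambda_m=R(\lambda_{m+1})$. Your presentation is slightly more streamlined in that you isolate the two scalar identities $\alpha_{m+1}+3(1-\lambda_m)\chi_{m+1}=1$ and $\beta_{m+1}+3(1-\lambda_m)\delta_{m+1}=1-3\lambda_{m+1}$ explicitly, whereas the paper carries out the equivalent algebra inline; note, however, that only the first of these is literally equivalent to a formula already recorded in Lemma~\ref{lem:formulas} (namely $1-\alpha-3\chi=-3R(\lambda)\chi$), so the second would need to be added or verified directly as you indicate.
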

\begin{proof}
We prove the lemma by
induction.  First, consider $\omega=\emptyset$ in \eqref{eq:fw12};
that is, if $m=1$ then:
\begin{eqnarray*}
  f(1,2)=f(1,4)&=&\alpha_1f(\emptyset,1)+\chi_1f(\emptyset,2)+\chi_1f(\emptyset,3)+\chi_1f(\emptyset,4)+\frac14(1-\alpha_1-3\chi_1)\\
               &=&\alpha_1+\frac14(1-\alpha_1-3\chi_1)\\
  &=&\frac14(1+3\alpha_1-3\chi_1)\\
  &=&\frac14\frac{2\lambda_1-3}{2\lambda_1-1}
\end{eqnarray*}
by Lemma \ref{lem:formulas} and
\begin{eqnarray*}
  f(1,3)&=&\beta_1f(\emptyset,1)+\delta_1f(\emptyset,2)+\delta_1f(\emptyset,3)+\delta_1f(\emptyset,4)+\frac14(1-\beta_1-3\delta_1)\\
  &=&\frac14(1+3\beta_1-3\delta_1)=\frac14 \frac{2\lambda_1-2}{2\lambda_1-1}
\end{eqnarray*}
by Lemma \ref{lem:formulas}.
By plugging in $\lambda_1=1/6$ we obtain that $f(1,2)=f(1,4)=1$ and
$f(1,3)=\frac58=1-\frac94\lambda_1$.
Assume that the claims holds for $m\ge 1$. Then, using the induction
hypothesis,  
\eqref{eq:alpham}, \eqref{eq:chim}, \eqref{eq:fw12} and Lemma \ref{lem:formulas}, we obtain: 
\begin{align*}
  f([1]^{m+1},2)&=\alpha_{m+1}f([1]^m,1)+\chi_{m+1}f([1]^m,2)+\chi_{m+1}f([1]^m,3)+\chi_{m+1}f([1]^m,4)\\
                &+\frac14\bigl( 1-\alpha_{m+1}-3\chi_{m+1} \bigr)\\
  &=(9-42\lambda_{m+1}+36\lambda_{m+1}^2)\chi_{m+1}+\chi_{m+1}+\chi_{m+1}\left(
    1-\frac94\lambda_m \right)+\chi_{m+1}\\
                &-\frac14\bigl( 3R(\lambda_{m+1}) \bigr)\chi_{m+1}\\
  \intertext{which, by factoring out $\chi_{m+1}$ and using the fact that
  $R(\lambda_{m+1})=\lambda_m$,  equals}
  &\chi_{m+1}\bigl( 12-42\lambda_{m+1}+36\lambda_{m+1}^2-3\lambda_m
    \bigr)\\
  \intertext{which by replacing $\lambda_m$ with $R(\lambda_{m+1})$
  equals}
                &\chi_{m+1}\bigl(12-87\lambda_{m+1}+180\lambda_{m+1}^2-108\lambda_{m+1}^3\bigr)\\
  &=\chi_{m+1}\cdot 3\bigl(4-29\lambda_{m+1}+60\lambda_{m+1}^2-36\lambda_{m+1}^3\bigr)=1.
\end{align*}
Hence $f([1]^{m+1},2)=f([1]^{n+1}),4)=1$. Now, by using   the
induction hypothesis, \eqref{eq:betam},  \eqref{eq:deltam}, \eqref{eq:fw13} and Lemma \ref{lem:formulas}, we obtain 
\begin{align*}
  f([1]^{m+1},3)&=\beta_{m+1}f([1]^m,1)+\delta_{m+1}f([1]^m,2)+\delta_{m+1}f([1]^m,3)+\delta_{m+1}f([1]^m,4)\\
                &+\frac14\bigl(1-\beta_1-3\delta_1)\\
                &=\beta_{m+1}+\delta_{m+1}+\delta_{m+1}\left(1-\frac94\lambda_m\right)+\delta_{m+1}\\
                &+\frac14\frac{18\cdot 3\lambda_{m+1}(\lambda_{m+1}-1)(1-2\lambda_{m+1})}{3(1-2\lambda_{m+1})f_2(\lambda_{m+1})}.\\ 
  \intertext{Next, by using the fact that
  $3\lambda_{m+1}(1-2\lambda_{m+1})=R(\lambda_{m+1})/(5-6\lambda_{m+1})$,
  we observe that $ f([1]^{m+1},3)$ equals}
  &\chi_{m+1}\left(6(1-4\lambda_{m+1}+3\lambda_{m+1}^2)+2-3\lambda_{m+1}+(2-3\lambda_{m+1})\bigl(
    1-\frac94\lambda_m \bigr)\right.\\
                &+2-3\lambda_{m+1}+\left. \frac14\frac{18(\lambda_{m+1}-1)\lambda_m}{5-6\lambda_{m+1}}\right)\\
  &=\chi_{m+1}\left( 18\lambda_{m+1}^2-33\lambda_{m+1}+12-\frac94\lambda_m\bigl( 2-3\lambda_{m+1}+\frac{2(\lambda_{m+1}-1)}{5-6\lambda_{m+1}} \bigr) \right).
\end{align*}
We simplify next the last parenthesis in the above expression:
\begin{align*}
  \frac94\lambda_m\bigl(
  2-3\lambda_{m+1}+\frac{2(\lambda_{m+1}-1)}{5-6\lambda_{m+1}}
  \bigr)&=\frac94\lambda_m\frac{12-29\lambda_{m+1}+18\lambda_{m+1}^2}{5-6\lambda_{m+1}}\\
  &=\frac94\lambda_m\frac{8-8\lambda_{m+1}}{5-6\lambda_{m+1}}+\frac94\lambda_m\frac{f_2(\lambda_{m+1})}{5-6\lambda_{m+1}}.
\end{align*}
Replacing $\lambda_m$ with
$R(\lambda_{m+1})=3\lambda_{m+1}(1-2\lambda_{m+1})(5-6\lambda_{m+1})$,
we obtain 
\[
  \frac94\lambda_m\bigl(
  2-3\lambda_{m+1}+\frac{2(\lambda_{m+1}-1)}{5-6\lambda_{m+1}}
  \bigr)=27\lambda_{m+1}(1-2\lambda_{m+1})(1-\lambda_{m+1})+\frac94\lambda_{m+1}3(1-2\lambda_{m+1})f_2(\lambda_{m+1}).
\]
Hence, since $3(1-2\lambda_{m+1})f_2(\lambda_{m+1})=1/\chi_{m+1}$, and
after multiplying through the remaining terms and simplifying, we
obtain 
\begin{align*}
  f([1]^{m+1},3)&=\chi_{m+1}(12-87\lambda_{m+1}+180\lambda_{m+1}^2-108\lambda_{m+1}^3)-\frac94\lambda_{m+1}=1-\frac94\lambda_{m+1}.
\end{align*}
The induction is now complete and the lemma is proved.
\end{proof}
Next, we consider $f([2]^m,j)$, where $j\in\{1,2,3,4\}$. We have already shown that $f([2]^m,2)
= 0$ for all $m\ge0$, so we now prove that 0 $\le f([2]^m,j)\le 1$ for
all $j\in \{1,3,4\}$.

\begin{lem}\label{lem:2m}
We have  $f([2]^m,1)=f([2]^m,3)=0$ and
$f([2]^m,4)=\frac34\lambda_m$ for all $m\ge 1$. Therefore  $0\le
f([2]^m,j)\le 1$ for all $j\in \{1,3,4\}$ since $0<\lambda_m\le 1/6$
for all $m\ge 1$. Moreover, $\lim_{m\to \infty}f([2]^m,i)=0$ for all $i\in\{1,2,3,4\}$.
\end{lem}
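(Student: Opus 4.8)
The plan is to prove the lemma by induction on $m$, following exactly the template of the proof of Lemma \ref{lem:1m}. The identity $f([2]^m,2)=0$ requires no work: it follows from the relation $f(\omega i,i)=f(\omega,i)$ established just before Lemma \ref{lem:fw}, applied repeatedly with $i=2$, together with $f(\emptyset,2)=0$ from Lemma \ref{lem:deltai}. For the base case $m=1$ I would set $\omega=\emptyset$ in \eqref{eq:fw21} and \eqref{eq:fw24}, insert $f(\emptyset,1)=1$ and $f(\emptyset,2)=f(\emptyset,3)=f(\emptyset,4)=0$, and evaluate the coefficients $\alpha_1,\beta_1,\chi_1,\delta_1$ at $\lambda_1=1/6$; a short computation gives $\chi_1=\tfrac12$, $\alpha_1=\tfrac32$, $\beta_1=\tfrac54$, $\delta_1=\tfrac34$, whence $f(2,1)=f(2,3)=0$ and $f(2,4)=\tfrac18=\tfrac34\lambda_1$, as claimed.

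For the inductive step, assume the formulas hold at level $m$ and write $\lambda=\lambda_{m+1}$, so that $\lambda_m=R(\lambda)$ by \eqref{eq:R}. Plugging the inductive values $f([2]^m,1)=f([2]^m,2)=f([2]^m,3)=0$ and $f([2]^m,4)=\tfrac34\lambda_m$ into \eqref{eq:fw21} with $\omega=[2]^m$ gives
\[
f([2]^{m+1},1)=f([2]^{m+1},3)=\tfrac34\lambda_m\chi_{m+1}+\tfrac14\bigl(1-\alpha_{m+1}-3\chi_{m+1}\bigr).
\]
I would then clear $\chi_{m+1}$ using $1/\chi_{m+1}=3(4-29\lambda+60\lambda^2-36\lambda^3)$, observe that $1-\alpha_{m+1}-3\chi_{m+1}$ collapses to $-3\chi_{m+1}R(\lambda)=-3\chi_{m+1}\lambda_m$ after recognizing the factor $R(\lambda)/\lambda=36\lambda^2-48\lambda+15$, and conclude that the two terms cancel, giving $0$.

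The harder computation is $f([2]^{m+1},4)$, obtained from \eqref{eq:fw24} with $\omega=[2]^m$ as
\[
f([2]^{m+1},4)=\tfrac34\lambda_m\delta_{m+1}+\tfrac14\bigl(1-\beta_{m+1}-3\delta_{m+1}\bigr).
\]
Here I would substitute $\lambda_m=R(\lambda)=3\lambda(1-2\lambda)(5-6\lambda)$ and $\delta_{m+1}=(2-3\lambda)\chi_{m+1}$, simplify $1-\beta_{m+1}-3\delta_{m+1}$ to $-54\lambda(1-\lambda)(1-2\lambda)\chi_{m+1}$, and factor out $\tfrac94\lambda(1-2\lambda)\chi_{m+1}$; the bracket that remains is $(5-6\lambda)(2-3\lambda)-6(1-\lambda)=18\lambda^2-21\lambda+4=f_2(\lambda)$. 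Finally, invoking the identity $1/\chi_{m+1}=3(1-2\lambda)f_2(\lambda)$ (the factorization used already in Lemma \ref{lem:1m} and recorded in Lemma \ref{lem:formulas}), everything collapses to $\tfrac34\lambda=\tfrac34\lambda_{m+1}$, completing the induction. The main obstacle is exactly this last simplification: spotting that the leftover quadratic is $f_2$ and that $\chi_{m+1}$ is the reciprocal of $3(1-2\lambda)f_2(\lambda)$, so that the apparently complicated rational expression telescopes.

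Once the formulas are established, the bounds are immediate: $f([2]^m,1)=f([2]^m,3)=0\in[0,1]$, while $0<\tfrac34\lambda_m\le\tfrac34\cdot\tfrac16=\tfrac18\le1$ since $0<\lambda_m\le1/6$ for all $m\ge1$. The limit statement follows because $15^m\lambda_m\to\lambda^{(2)}$ forces $\lambda_m\to0$, so $f([2]^m,4)=\tfrac34\lambda_m\to0$ while the other three values are identically $0$.
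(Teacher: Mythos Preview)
Your proposal is correct and follows essentially the same approach as the paper: induction on $m$ using the recursive formulas \eqref{eq:fw21} and \eqref{eq:fw24}, with the key simplifications coming from $1-\alpha-3\chi=-3R(\lambda)\chi$, the factorization $\lambda_m=R(\lambda_{m+1})=3\lambda(1-2\lambda)(5-6\lambda)$, and the identity $1/\chi=3(1-2\lambda)f_2(\lambda)$. Your organization of the $f([2]^{m+1},4)$ computation---factoring out $\tfrac94\lambda(1-2\lambda)\chi_{m+1}$ and recognizing the leftover quadratic as $f_2(\lambda)$ directly---is in fact a bit cleaner than the paper's longer chain of identities, but the substance is identical.
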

\begin{proof}
  We prove the lemma by induction.
First, consider $\omega=\emptyset$ in \eqref{eq:fw21} and
\eqref{eq:fw24}, and, hence, $m=1$. We obtain
\begin{eqnarray*}
  f(2,1)=f(2,3)&=&\chi_1f(\emptyset,1)+\alpha_1f(\emptyset,2)+\chi_1f(\emptyset,3)+\chi_1f(\emptyset,4)+\frac14(1-\alpha_1-3\chi_1)\\
               &=&\chi_1+\frac14(1-\alpha_1-3\chi_1)\\
  &=&\frac14(1-\alpha_1+\chi_1)\\
  &=&\frac14\frac{1-6\lambda_1}{3(1-2\lambda_1)}
\end{eqnarray*}
and
\begin{eqnarray*}
  f(2,4)&=&\delta_1f(\emptyset,1)+\beta_1f(\emptyset,2)+\delta_1f(\emptyset,3)+\delta_1f(\emptyset,4)+\frac14(1-\beta_1-3\delta_1)\\
        &=&\frac14(1-\beta_1+\delta_1)\\
  &=&\frac14 \frac{2-6\lambda_1}{3(1-2\lambda_1)}.
\end{eqnarray*}
Since $\lambda_1=1/6$ it follows that
\begin{eqnarray*}
  f(2,1)=f(2,3)&=&\frac14\frac{1-6\lambda_1}{3(1-2\lambda_1)} = 0  
\end{eqnarray*}
and
\[
f(2,4)=\frac14\frac{1-6\lambda_1}{3(1-2\lambda_1)}=\frac3{24}=\frac34\lambda_1.
\]
Assume now that $f([2]^m,1)=f([2]^m,3)=0$ and that $f([2]^m,4)=\frac34\lambda_m$. We prove that $f([2]^{m+1},1)=f([2]^{m+1},3)=0$ and that $f([2]^{m+1},4)=\frac34\lambda_{m+1}$. 
Using \eqref{eq:fw24} we have
\begin{eqnarray*} 
f([2]^{m+1},4)&=&\delta_{m+1}f([2]^m,1)+\beta_{m+1}f([2]^m,2)+\delta_{m+1}f([2]^m,3)+\delta_{m+1}f([2]^m,4)\\
&+&\frac14(1-\beta_{m+1}-3\delta_{m+1})\\
&=&\frac34\lambda_m\delta_{m+1}+\frac14\frac{18\lambda_{m+1}(\lambda_{m+1}-1)3(1-2\lambda_{m+1})}{3(1-2\lambda_{m+1})f_2(\lambda_{m+1})}.
\end{eqnarray*}
Recall from \eqref{eq:deltam} and \ref{lem:formulas}  that
\[
\delta_{m+1}=\frac{2-3\lambda_{m+1}}{3(1-2\lambda_{m+1})f_2(\lambda_{m+1})}.
\]
Then notice from Lemma \ref{lem:formulas} that
\[
3\lambda_{m+1}(1-2\lambda_{m+1})=\frac{R(\lambda_{m+1})}{5-6\lambda_{m+1}}=\frac{\lambda_m}{5-6\lambda_{m+1}}.
\]
Therefore,
\[
f([2]^{m+1},4)=\frac34\lambda_m\frac{2-3\lambda_{m+1}}{3(1-2\lambda_{m+1})f_2(\lambda_{m+1})}+\frac14\frac{18(\lambda_{m+1}-1)\lambda_m}{3(1-2\lambda_{m+1})f_2(\lambda_{m+1})(5-6\lambda_{m+1})}.
\]
Factoring out $\frac34\lambda_{m}$ and the denominator we obtain 
\begin{eqnarray*}
f([2]^{m+1},4)&=&\frac34\lambda_m\frac1{3(1-2\lambda_{m+1})f_2(\lambda_{m+1})}\left(2-3\lambda_{m+1}+\frac{6(\lambda_{m+1}-1)}{5-6\lambda_{m+1}}\right).
\end{eqnarray*}
We now use the following relationship
\[
2-3\lambda_{m+1}+\frac{6(\lambda_{m+1}-1)}{5-6\lambda_{m+1}}=\frac{f_2(\lambda_{m+1})}{5-6\lambda_{m+1}}.
\]
Therefore,
\[
f([2]^{m+1},4)=\frac34\lambda_m\frac1{3(1-2\lambda_{m+1})f_2(\lambda_{m+1})}\frac{f_2(\lambda_{m+1})}{5-6\lambda_{m+1}}=\frac34\lambda_m\frac1{3(1-2\lambda_{m+1})(5-6\lambda_{m+1})}.
\]
Finally, notice that 
\[
\lambda_m=R(\lambda_{m+1})=3\lambda_{m+1}(1-2\lambda_{m+1})(5-6\lambda_{m+1}).
\]
Therefore,
\[
f([2]^{m+1},4)=\frac34\lambda_{m+1}.
\]
The proof that $f([2]^{m+1},1) = f([2]^{m+1},3) = 0$ follows as below:
\begin{eqnarray*}
f([2]^{m+1},1)&=& \chi_{m+1}f([2]^m,1)+\alpha_{m+1}f([2]^m,2)+\chi_{m+1}f([2]^m,3)+\chi_{m+1}f([2]^m,4)\\
&+&\frac14(1-\alpha_{m+1}-3\chi_{m+1})\\
&=&\frac34\chi_{m+1}\lambda_m-\frac14\frac{3R(\lambda_{m+1})}{3(1-2\lambda_{m+1})(f_2(\lambda_{m+1}))}\\
&=&\frac14\frac{3\lambda_m-3R(\lambda_{m+1})}{3(1-2\lambda_{m+1}f_2(\lambda_{m+1})}\\
&=& \frac14\frac{3\lambda_m-3\lambda_m}{3(1-2\lambda_{m+1})f_2(\lambda_{m+1})} = 0.
\end{eqnarray*}
The induction is now complete and the lemma is proved.
\end{proof}
We turn now to prove  $0\le f([5]^m,j)\le 1$ for all $j\in
\{1,2,3,4\}$. Recall from Lemmas \ref{lem:fw},  \ref{lem:1m} and
 \ref{lem:2m} that 
$f(5,1)=f(1,3)=1-\frac94\lambda_1$ and $f(5,2)=f(5,3)=f(5,4)=f(2,4)=\frac34\lambda_1$.
\begin{lem}\label{lem:5m}
  We have 
  \begin{equation}\label{eq:5m1}
    f([5]^m,1)=\frac14+\frac14\frac1{3^{m-1}}\prod_{k=1}^m\frac{1}{1-2\lambda_k}
  \end{equation}
  and
  \begin{equation}\label{eq:5m2}
    f([5]^m,2)=f([5]^m,3)=f([5]^m,4)=\frac14-\frac14\frac1{3^m}\prod_{k=1}^m\frac1{1-2\lambda_k}
  \end{equation}
  for all $m\ge 2$. Therefore, $\{f([5]^m,1)\}_{m\ge 1}$ is a decreasing sequence
  and $\{f([5]^m,2)\}_{m\ge 1}$ is an increasing sequence. The limit
  of both of these sequences equals $1/4$.
\end{lem}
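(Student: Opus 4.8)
The plan is to reduce the four quantities $f([5]^m,i)$, $i=1,2,3,4$, to a two-variable linear recursion and then decouple it. Write $A_m:=f([5]^m,1)$ and observe first that $f([5]^m,2)=f([5]^m,3)=f([5]^m,4)$: this holds at $m=1$ by the values recalled just before the lemma, and the three formulas \eqref{eq:fw24}, \eqref{eq:fw31}, \eqref{eq:fw42} applied to $\omega=[5]^m$ all collapse to $\delta_{m+1}A_m+(\beta_{m+1}+2\delta_{m+1})B_m+\tfrac14(1-\beta_{m+1}-3\delta_{m+1})$ once the three level-$m$ values agree, so the common value is preserved. Denote it by $B_m:=f([5]^m,2)$. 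Feeding $\omega=[5]^m$ into \eqref{eq:fw13} and \eqref{eq:fw24} then yields the system
\[
A_{m+1}=\beta_{m+1}A_m+3\delta_{m+1}B_m+\tfrac14(1-\beta_{m+1}-3\delta_{m+1}),\qquad
B_{m+1}=\delta_{m+1}A_m+(\beta_{m+1}+2\delta_{m+1})B_m+\tfrac14(1-\beta_{m+1}-3\delta_{m+1}).
\]

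The key step is to diagonalize this $2\times2$ recursion by passing to the difference $D_m:=A_m-B_m$ and the sum $S_m:=A_m+3B_m$. Subtracting the two equations the constant terms cancel and the mixed terms collapse, giving $D_{m+1}=(\beta_{m+1}-\delta_{m+1})D_m$; forming $A_{m+1}+3B_{m+1}$ likewise gives $S_{m+1}-1=(\beta_{m+1}+3\delta_{m+1})(S_m-1)$. Using the factorization $4-29\lambda+60\lambda^2-36\lambda^3=(1-2\lambda)f_2(\lambda)$ from Lemma \ref{lem:formulas} together with $6(1-4\lambda+3\lambda^2)-(2-3\lambda)=f_2(\lambda)$, the definitions \eqref{eq:betam}, \eqref{eq:deltam} simplify to $\beta_m-\delta_m=\tfrac{1}{3(1-2\lambda_m)}$. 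For the sum, the base case is the decisive point: from the recalled values, $S_1=A_1+3B_1=(1-\tfrac94\lambda_1)+3\cdot\tfrac34\lambda_1=1$, and since the recursion for $S_m-1$ is homogeneous, $S_m\equiv1$ for all $m\ge1$.

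Knowing $S_m=1$, I would solve $A_m+3B_m=1$ and $A_m-B_m=D_m$ to obtain $A_m=\tfrac14+\tfrac34D_m$ and $B_m=\tfrac14-\tfrac14D_m$, so everything reduces to evaluating $D_m$. From $D_{m+1}=\tfrac{1}{3(1-2\lambda_{m+1})}D_m$ and the base value $D_1=A_1-B_1=(1-\tfrac94\lambda_1)-\tfrac34\lambda_1=1-3\lambda_1=\tfrac12$ (recall $\lambda_1=1/6$), telescoping gives $D_m=\tfrac12\prod_{k=2}^m\tfrac{1}{3(1-2\lambda_k)}$. Pulling out the $k=1$ factor via $1-2\lambda_1=2/3$ rewrites this as $D_m=\tfrac{1}{3^m}\prod_{k=1}^m\tfrac{1}{1-2\lambda_k}$, and substituting into the expressions for $A_m$ and $B_m$ produces exactly \eqref{eq:5m1} and \eqref{eq:5m2}.

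Finally, for monotonicity and the limit I would argue directly from $D_m$. Since $0<\lambda_k\le1/6$ for all $k\ge1$, each factor satisfies $1-2\lambda_k\ge2/3$, hence $D_{m+1}=\tfrac{1}{3(1-2\lambda_{m+1})}D_m\le\tfrac12D_m$ with $D_m>0$; thus $\{D_m\}$ is strictly decreasing and $0<D_m\le 2^{-m}\to0$. Consequently $A_m=\tfrac14+\tfrac34D_m$ decreases strictly to $1/4$ while $B_m=\tfrac14-\tfrac14D_m$ increases strictly to $1/4$, which is the assertion. I expect the only real friction to be the algebraic simplification of $\beta_m-\delta_m$ and the verification that the sum $S_m$ starts (and hence stays) equal to $1$; once the combinations $S_m$ and $D_m$ are isolated the recursion decouples completely and the remainder is bookkeeping handled by the identities in Lemma \ref{lem:formulas}.
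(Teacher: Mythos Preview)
Your argument is correct and cleaner than the paper's. The paper proves the two closed-form expressions by a direct induction on $m$: it first computes $f(55,1)$ and $f(55,2)$ by hand (the base case $m=2$), and then in the inductive step plugs the assumed formulas for $f([5]^m,j)$ into \eqref{eq:fw13} and \eqref{eq:fw24}, simplifies, and uses $\beta_{m+1}-\delta_{m+1}=\tfrac{1}{3(1-2\lambda_{m+1})}$ to watch the product acquire one extra factor. Your route instead diagonalizes the two-dimensional recursion via the change of variables $S_m=A_m+3B_m$ and $D_m=A_m-B_m$: the observation that $S_1=1$ forces $S_m\equiv1$, which is the conceptual reason the constant $\tfrac14$ appears, and the scalar recursion $D_{m+1}=(\beta_{m+1}-\delta_{m+1})D_m$ explains the product structure without any guesswork. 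A small bonus of your method is that it shows the closed forms already hold at $m=1$ (not only $m\ge2$ as stated), and it makes the monotonicity and limit transparent via $0<D_{m+1}\le\tfrac12 D_m$; the paper instead appeals to $\lambda_m\downarrow0$ at the end.
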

\begin{proof}
  We begin with $m=2$. Using 
  \eqref{eq:fw13} we have 
  \begin{align*}
    f(55,1)&=f(51,3)=\beta_2f(5,1)+\delta_2f(5,2)+\delta_2f(5,3)+\delta_3f(5,4)\\
           &+\frac14\bigl( 1-\beta_2-3\delta_2 \bigr)\\
    &=\beta_2\left( 1-\frac94\lambda_1
      \right)+3\delta_2\frac34\lambda_1+\frac14\bigl(
      1-\beta_2-3\delta_2 \bigr)\\
           &=\frac14+\frac34(\beta_2-\delta_2)-\frac94\lambda_1(\beta_2-\delta_2)\\
&=\frac14+\frac34(\beta_2-\delta_2)(1-3\lambda_1)\\
    &=\frac34\frac1{3(1-2\lambda_2)}(1-3\lambda_1)\\
&=\frac14+\frac14\frac13\frac1{1-2\lambda_1}\frac1{1-2\lambda_2},
  \end{align*}
  where in the last step we used Lemma \ref{lem:formulas} and the fact that
  $1-3\lambda_1=\frac13\frac1{1-2\lambda_1}$ since $\lambda_1=1/6$.

  Using equation \eqref{eq:fw24} we obtain
  \begin{align*}
    f(55,2)=f(52,4)&=\delta_2f(5,1)+\beta_2f(5,2)+\delta_2f(5,3)+\delta_2f(5,4)\\
           &+\frac14\bigl( 1-\beta_2-3\delta_2 \bigr)\\
    &=\delta_2\left( 1-\frac94\lambda_1
      \right)+\beta_2\frac34\lambda_1+2\delta_2\frac34\lambda_1+\frac14\bigl(
      1-\beta_2-3\delta_2 \bigr)\\
    &=\frac14-\frac14(\beta_2-\delta_2)(1-3\lambda_1)=\frac14-\frac14\frac1{3^2}\frac1{1-2\lambda_1}\frac1{1-2\lambda_2}.
  \end{align*}
  Since $f(5,2)=f(5,3)=f(5,4)$, the equations \eqref{eq:fw24},
  \eqref{eq:fw31}, and \eqref{eq:fw42} imply that $f(55,2)=f(55,3)=f(55,4)$.
  Assume now that \eqref{eq:5m1} and \eqref{eq:5m2} hold for $m\ge
  2$. We prove the induction step:
  \begin{align*}
    f([5]^{m+1},1)=f([5]^m1,3)&=\beta_{m+1}f([5]^m,1)+\delta_{m+1}f([5]^m,2)+\delta_{m+1}f([5]^m,3)+\delta_{m+1}f([5]^m,4)\\
                  &+\frac14\bigl( 1-\beta_{m+1}-3\delta_{m+1} \bigr)\\
    &=\beta_{m+1}\left(
      \frac14+\frac14\frac1{3^{m-1}}\prod_{k=1}^m\frac{1}{1-2\lambda_k}
      \right)+3\delta_{m+1}\left(
      \frac14-\frac14\frac1{3^m}\prod_{k=1}^m\frac1{1-2\lambda_k}
      \right)\\
                  &+\frac14\bigl( 1-\beta_{m+1}-3\delta_{m+1} \bigr)\\
    \intertext{which, after we cancel out $\frac14\beta_{m+1}$ and
    $\frac34\delta_{m+1}$,  and factor out the products (notice that
    the 3 in front of $\delta_{m+1}$ reduces the power of 3 in the
    second product), equals}
    &\frac14+\frac14\frac1{3^{m-1}}\prod_{k=1}^m\frac1{1-2\lambda_k}\bigl(
      \beta_{m+1}-\delta_{m+1} \bigr)
                  =\frac14+\frac14\frac1{3^m}\prod_{k=1}^{m+1}\frac1{1-2\lambda_k},               
  \end{align*}
  where we used Lemma \ref{lem:formulas}  in the last
  equality.

  Using now equation \eqref{eq:fw24} we obtain 
  \begin{align*}
    f([5]^{m+1},1)=f([5]^m1,3)&=\delta_{m+1}f([5]^m,1)+\beta_{m+1}f([5]^m,2)+\delta_{m+1}f([5]^m,3)++\delta_{m+1}f([5]^m,4)\\
                  &+\frac14\bigl( 1-\beta_{m+1}-3\delta_{m+1} \bigr)\\
    &=\delta_{m+1}\left(
      \frac14+\frac14\frac1{3^{m-1}}\prod_{k=1}^m\frac{1}{1-2\lambda_k}
      \right)+\beta_{m+1}\left(
      \frac14-\frac14\frac1{3^m}\prod_{k=1}^m\frac1{1-2\lambda_k}
      \right)\\
    &+2\delta_{m+1}\left(
      \frac14-\frac14\frac1{3^m}\prod_{k=1}^m\frac1{1-2\lambda_k}
      \right)+\frac14\bigl( 1-\beta_{m+1}-3\delta_{m+1} \bigr)\\
    \intertext{which, after canceling  $\frac14\beta_{m+1}$ and
    $\frac34\delta_{m+1}$, and simplifying the remaining two
    expressions involving $\delta_{m+1}$ (notice that the expression
    with a $+$ in front of the big product needs to be multiplied by a
    3 for the common denominator) equals}
                  &\frac14+\delta_{m+1}\frac14\frac1{3^m}\prod_{k=1}^m\frac1{1-2\lambda_k}-\beta_{m+1}\frac14\frac1{3^m}\prod_{k=1}^m\frac1{1-2\lambda_k}\\
    &=\frac14-\frac14\frac1{3^m}\prod_{k=1}^m\frac1{1-2\lambda_k}\bigl(
      \beta_{m+1}-\delta_{m+1} \bigr)\\
    &=\frac14-\frac14\frac1{3^{m+1}}\prod_{k=1}^{m+1}\frac1{1-2\lambda_k}
  \end{align*}
  by using Lemma \ref{lem:formulas} again.
  The induction is complete. The last part of the lemma follows from
  the fact that $\{\lambda_m\}$ is a decreasing sequence whose limit
  is 0 and $\lambda_1=1/6$. In particular,
  $\lim_{m\to\infty}\frac1{3^m}\prod_{k=1}^m\frac{1}{1-2\lambda_k}$ is
  decreasing to 0.
\end{proof}
Thus, 0 $\le f([i]^m,j)\le 1$ for all $i\in\{1,2,3,4,5\}$ and $j\in \{1,2,3,4\}$. 
The next step is to prove that $0\le f(\omega,j)\le 1$ for all
$(\omega,j)\in \overline{\Sigma^*}$. We accomplish this by showing
that, for a fixed $i\in \Sigma$, we have 
$\max_{(i\omega,j)}f(i\omega,j)=\max_{j}f(i,j)$ and
$\min_{(i\omega,j)}f(i\omega,j)=\min_{j}f(i,j)$ (where $i\omega$ is
the word formed by the letter $i$ followed by the word $\omega$). 
By Remark \ref{rem:rot} we only need to prove the statement for
$i=1$, $i=2$ and $i=5$.
\begin{prop}\label{prop:maxf}
  With the notation as above, we have 
  \[\max_{\{(i\omega,j)\,:\,\omega\in
      \Sigma^*,j\in\{1,2,3,4\}\}}f(i\omega,j)=\max_{j\in\{1,2,3,4\}}f(i,j)\]
  and
\[\min_{\{(i\omega,j)\,:\,\omega\in
      \Sigma^*,j\in\{1,2,3,4\}\}}f(i\omega,j)=\min_{j\in\{1,2,3,4\}}f(i,j)\] for all $i\in\{1,2,3,4,5\}$. Therefore,
$\max_{(\omega,j)\in\overline{\Sigma^*}}f(\omega,j)=1$ and
$\min_{(\omega,j)\in \overline{\Sigma^*}}f(\omega,j)=0$.
\end{prop}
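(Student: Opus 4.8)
The plan is to prove, by induction on the length of $\omega$, that for each fixed first letter $i$ the four numbers $f(i\omega,1),\dots,f(i\omega,4)$ all lie in the interval $I_i:=[\min_j f(i,j),\max_j f(i,j)]$. Since $\{q_{i\omega,j}\}$ runs over all vertices of the cell $F_i(VS_2)$ and the endpoints of $I_i$ are already attained at $\omega=\emptyset$, this is exactly the proposition. By the rotations of Remark \ref{rem:rot} it suffices to treat $i\in\{1,2,5\}$, and Lemmas \ref{lem:1m}, \ref{lem:2m}, \ref{lem:5m} identify $I_1=[5/8,1]$, $I_2=[0,1/8]$ and $I_5=[1/8,5/8]$ together with the extremal configurations at $\omega=\emptyset$.

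The engine of the induction is a centered reformulation of Lemma \ref{lem:fw}. Put $\hat f(\omega,j):=f(\omega,j)-\tfrac14$ and $S_\omega:=\sum_{j=1}^4\hat f(\omega,j)$. A direct computation shows that the affine constant $\tfrac14(1-\alpha_{m+1}-3\chi_{m+1})$ in each formula of Lemma \ref{lem:fw} cancels against $\tfrac14$, so every entry takes the constant-free form
\[
\hat f(\omega i',j)=c_1\,\hat f(\omega,a)+c_2\,S_\omega,
\]
where $a$ is the distinguished corner for that entry and $(c_1,c_2)$ is either $(\alpha_{m+1}-\chi_{m+1},\,\chi_{m+1})$ or $(\beta_{m+1}-\delta_{m+1},\,\delta_{m+1})$. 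Using the identities $\alpha-\chi=\tfrac{2}{3(1-2\lambda)}$ and $\beta-\delta=\tfrac{1}{3(1-2\lambda)}$, one checks that all four of $\alpha-\chi,\beta-\delta,\chi,\delta$ are positive for $\lambda\in(0,1/6]$, while the total weights satisfy $\alpha+3\chi>1$ and $\beta+3\delta>1$. Thus each new centered value is a positive combination of a single old corner and the cell sum $S_\omega$, with combined weight exceeding $1$.

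For $i=5$ one has $S_5=0$, the central subcell again has vanishing sum, and the displayed rule collapses to $\hat f(\omega i',j)=c_1\hat f(\omega,a)$ with $c_1\in\{\alpha-\chi,\beta-\delta\}<1$ (the extension always uses $\lambda_m$ with $m\ge 2$, hence $\lambda_m<1/6$); the values contract strictly toward $1/4$ and never leave $I_5$, so this case is easy. For $i=1$ (and symmetrically $i=2$) every corner value exceeds $1/4$, so $\hat f(\omega,j)\ge 0$ and $S_\omega\ge 0$, and the inequality \emph{away} from the global extreme — namely $f(1\omega,j)\ge 5/8$ — is immediate, since
\[
\hat f(\omega i',j)=c_1\hat f(\omega,a)+c_2S_\omega\ge (c_1+4c_2)\min_j\hat f(\omega,j)\ge \min_j\hat f(\omega,j),
\]
so the running minimum of $\hat f$ cannot decrease and stays at least $3/8$.

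The hard direction is the barrier $f(1\omega,j)\le 1$ (respectively $f(2\omega,j)\ge 0$), and this is where the difficulty concentrates. Because $\alpha+3\chi>1$, the crude estimate gives only $\hat f(\omega i',j)\le(c_1+4c_2)\max_j\hat f(\omega,j)$, which overshoots $3/4$; worse, the cell sum is not monotone under subdivision, since the central subcell obeys $S_{\omega 5}=(\beta+3\delta)S_\omega$ with $\beta+3\delta>1$, so $f\le 1$ cannot follow from bounding $S_\omega$ alone. I expect the real work to lie in isolating the correct invariant coupling the corner values to $S_\omega$ — equivalently, propagating the precise three-corner configurations produced by Lemma \ref{lem:w234} and matching them against the exact values from the pure-word Lemmas \ref{lem:1m} and \ref{lem:2m} — so that the extreme value $1$ (respectively $0$) is exhibited as an invariant ceiling (respectively floor). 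Once both inequalities hold for $i\in\{1,2,5\}$, Remark \ref{rem:rot} transfers them to $i\in\{3,4\}$, and assembling the cells yields $\max f=1$ and $\min f=0$ over $\overline{\Sigma^*}$.
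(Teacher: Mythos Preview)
Your centered rewriting $\hat f(\omega,j)=f(\omega,j)-\tfrac14$ and the resulting formula $\hat f(\omega i',j)=c_1\hat f(\omega,a)+c_2S_\omega$ are correct and genuinely illuminating; the easy direction for $i=1$ (namely $\hat f(1\omega,j)\ge 3/8$) and its mirror for $i=2$ are handled cleanly by your monotonicity-of-the-running-minimum argument. But the proposal is not a proof: two of the three pieces are missing.

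First, the $i=5$ case is not ``easy'' for the reason you give. You use that $S_5=0$ to collapse the rule to $\hat f(5 i',j)=c_1\hat f(5,a)$, and this is fine for the first step. But after one subdivision into a non-central subcell, the cell sum is no longer zero: for instance $S_{51}=\bigl(1+2(\alpha_2-\chi_2)+(\beta_2-\delta_2)\bigr)\hat f(5,1)>0$, and more generally $S_{5\omega}=0$ holds only when $\omega\in\{[5]^k\}$. So the ``contraction toward $1/4$'' does not propagate, and both the upper and lower bounds in $I_5$ still need an argument comparable in difficulty to the $i=1$ upper bound. Second, you explicitly leave the barrier $f(1\omega,j)\le 1$ unproved, correctly noting that $\alpha+3\chi>1$ and that $S_\omega$ is not monotone; that is precisely where the content of the proposition lives, and ``I expect the real work to lie in\ldots'' is not yet the work.

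The paper closes these gaps by a different, more computational route: rather than seeking a single scalar invariant, it tracks the \emph{configuration type} of the four corner values (e.g.\ ``three corners equal $1$ and the fourth equals $1-\tfrac94\lambda_m$'', or ``three corners equal and the fourth lies strictly inside'') and verifies by explicit algebraic manipulation---using the exact identities $\alpha-\chi=\tfrac{2}{3(1-2\lambda)}$, $\beta-\delta=\tfrac{1}{3(1-2\lambda)}$, $1-\alpha-3\chi=-3R(\lambda)\chi$, and the factorization $R(\lambda)=3\lambda(1-2\lambda)(5-6\lambda)$---that each of the finitely many configuration types reproduces only configuration types from the same list, all with values in $I_i$. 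Your intuition in the final paragraph (``propagating the precise three-corner configurations produced by Lemma \ref{lem:w234}'') is exactly this, but it has to be carried out, and the exact identities---not just inequalities on $c_1,c_2$---are what make the extremal value $1$ an honest ceiling.
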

\begin{proof}
  We begin by proving the proposition for the case
  $\vert\omega\vert=1$. 
  As noted above, it suffices to consider the cases  $i=1,2$ and
  $5$. We begin with $i=1$. Lemma \ref{lem:1m}  implies that
  $\min_{j\in\{1,2,3,4\}}f(1,j)=1-9\lambda_1/4$ and
  $\max_{j\in\{1,2,3,4\}}f(1,j)=1$. Moreover, the minimum is attained
  at $j=3$, and the maximum is attained at $j=1,2$ and 4. The second part of Lemma
  \ref{lem:w234} with $\omega=1$ and Lemma \ref{lem:1m} implies that 
  \[
    f(11,2)=f(11,4)=f(12,1)=f(12,3)=f(14,1)=f(14,3)=1
  \]
  and $f(11,3)=f(12,4)=f(14,2)=1-9\lambda_2/4$. Recall also that
  $f(11,1)=f(1,1)=1$, $f(12,2)=f(1,2)=1$, $f(13,3)=f(1,3)=1$,
  $f(14,4)=f(1,4)=1-\frac94\lambda_1$, $f(15,1)=f(11,3)$, $f(15,2)=f(12,4)$,
  $f(15,3)=f(13,1)$, and $f(15,4)=f(14,2)$. Therefore we only need
  to check that $1-9/4\lambda_1\le f(13,2)=f(13,4),f(13,1)\le
  1$. Using equation \eqref{eq:fw32} we have
  \begin{align*}
    f(13,2)&=\chi_2f(1,1)+\chi_2f(1,2)+\alpha_2f(1,3)+\chi_2f(1,4)\\
           &+\frac14\bigl( 1-\alpha_2-3\chi_2 \bigr)\\
    &=\chi_2+\chi_2+\alpha_2\bigl( 1-\frac94\lambda_1 \bigr)+\chi_2-\frac34R(\lambda_2)\chi_2.
  \end{align*}
  Using  \eqref{eq:alpham} we have 
  \[
    \alpha_2=(9-42\lambda_2+36\lambda_2^2)\chi_2=\chi_2+(8-42\lambda_2+36\lambda_2^2)\chi_2.
  \]
  Therefore,
  \begin{align*}
    \alpha_2\bigl( 1-\frac94\lambda_1 \bigr)&=\chi_2\bigl(
                                              1-\frac94\lambda_1
                                              \bigr)+(8-42\lambda_2+36\lambda_2^2)\chi_2\bigl(
                                              1-\frac94\lambda_1
                                              \bigr)\\
    &< \chi_2\bigl( 1-\frac94\lambda_2 \bigr)+(8-42\lambda_2+36\lambda_2^2)\chi_2
  \end{align*}
  since $1-9\lambda_1/4<1$. Therefore,
  \begin{align*}
    f(13,2)&<\chi_2+\chi_2+\chi_2\bigl( 1-\frac94\lambda_1
             \bigr)+(8-42\lambda_2+36\lambda_2^2)\chi_2+\chi_2-\frac34\lambda_1\chi_2\\
           &=\chi_2+\chi_2+\chi_2\bigl(1-\frac94\lambda_1\bigr)+\alpha_2-\frac34\lambda_1\chi_2\\
    &=1,
  \end{align*}
  where we grouped a $\chi_2$ with
  $(8-42\lambda_2+36\lambda_2^2)\chi_2$ to obtain $\alpha_2$, and we
  used the proof of Lemma \ref{lem:1m} to get the last equality. 
  A similar computation shows that
  \begin{align*}
    f(13,4)&=\chi_2+\chi_2+\chi_2\bigl( 1-\frac94\lambda_1
             \bigr)+\alpha_2-(8-42\lambda_2+36\lambda_2^2)\chi_2\bigl(
             1-\frac94\lambda_1 \bigr)-\frac34\lambda_1\chi_2\\
           &=1-(8-42\lambda_2+36\lambda_2^2)\chi_2\frac94\lambda_1\\
    &>1-\frac94\lambda_1.
  \end{align*}
  For the last  remaining vertex, we  use \eqref{eq:fw13}. We have
  \begin{align*}
    f(13,1)&=\delta_2f(1,1)+\delta_2f(1,2)+\beta_2f(1,3)+\delta_2f(1,4)\\
           &+\frac14(1-\beta_2-3\delta_2)\\
    &=\delta_2+\delta_2+\beta_2\bigl( 1-\frac94\lambda_1 \bigr)  +\delta_2+\frac14(1-\beta_2-3\delta_2).
  \end{align*}
  Since $\beta_2-\delta_2=\frac1{3-6\lambda_2}$, we have 
  $\beta_2=\delta_2+\frac1{3-6\lambda_2}$. Therefore,
  \begin{align*}
    f(13,1)&=\delta_2+\delta_2+\delta_2\bigl( 1-\frac94\lambda_1
             \bigr)+\frac1{3-6\lambda_2}\bigl( 1-\frac94\lambda_1 \bigr)+\delta_2+\frac14(1-\beta_2-3\delta_2)\\
    &<\delta_2+\delta_2+\delta_2\bigl( 1-\frac94\lambda_1
      \bigr)+\frac1{3-6\lambda_2}+\delta_2+\frac14(1-\beta_2-3\delta_2)\\
    \intertext{which, by combining a $\delta_2$ with
    $\frac1{3-6\lambda_2}$, equals}
    &\beta_2+\delta_2+\delta_2+\delta_2\bigl( 1-\frac94\lambda_1
      \bigr)+\frac14(1-\beta_2-3\delta_2)\\
    &=1-\frac92\lambda_2<1,
  \end{align*}
  by the proof of Lemma \ref{lem:1m}. To prove that $f(13,1)>f(1,3)$
  we modify the above proof as follows:
  \begin{align*}
    f(13,1)-f(1,3)&=\delta_2+\delta_2+\bigl(1-\frac94\lambda_1
                    \bigr)(\beta_2-1)+\delta_2+\frac14(1-\beta_2-3\delta_2)
  \end{align*}
  which using that     $\beta_2=\delta_2+\frac1{3-6\lambda_2}$ equals
  \begin{align*}
    &\bigl( 1-\frac94\lambda_1 \bigr)\delta_2+\bigl(
    1-\frac94\lambda_1 \bigr)\bigl( \frac1{3-6\lambda_2}-1
      \bigr)+(\delta_2+\frac1{3-6\lambda_2})-\frac1{3-6\lambda_2}+\delta_2+\delta_2\\
    &+\frac14(1-\beta_2-3\delta_2).\\
    \intertext{Since $\beta_2=\delta_2+\frac1{3-6\lambda_2}$,  by the
    proof of Lemma \ref{lem:1m}, and by 
    factoring out $\frac1{3-6\lambda_2}$, $f(13,1)-f(1,3)$ equals}
    &1-\frac94\lambda_2-\bigl( 1-\frac94\lambda_1 \bigr)+\bigl(
      1-\frac94\lambda_1-1 \bigr)\frac1{3-6\lambda_2}\\
    &>-\frac94\lambda_2+\frac94\lambda_1-\frac34\lambda_1=-\frac94\lambda_2+\frac64\lambda_1>0,
  \end{align*}
  where we used the fact that $\frac1{3-6\lambda_2}>\frac13$ and the
  fact that the inequality $\lambda_1>\frac32\lambda_2$ is clearly
  true. Therefore $f(13,1)>f(1,3)=1-\frac94\lambda_1$. 
    
  We move now to the case $i=2$ and $\vert\omega\vert=1$. We note that
  by Lemma \ref{lem:2m}, $\min_{j\in\{1,2,3,4\}}f(2,j)=0$ 
  and $\max_{j\in\{1,2,3,4\}}f(2,j)=3\lambda_1/4$. Moreover, the
  minimum is attained at $j=1,2$ and 3, and the maximum is attained
  at $j=4$.
  Therefore Lemma \ref{lem:2m} and the first part
  of Lemma \ref{lem:w234} applied to $\omega=2$  imply that
  \[
    f(22,1)=f(22,3)=f(23,2)=f(23,4)=f(21,2)=f(21,4)=0,
  \]
  and $f(21,3)=f(22,4)=f(23,1)=3\lambda_2/4<3\lambda_1/4$. Also,
  $f(22,2)=f(2,2)=0$, $f(23,3)=f(2,3)=0$, 
  $f(24,4)=f(2,4)=3\lambda_1/4$, $f(25,1)=f(21,3)$, $f(25,2)=f(22,4)$,
  $f(25,3)=f(23,1)$, and $f(25,4)=f(24,2)$. Therefore, we only need to check
  that $0<f(24,1)=f(24,3),f(24,2)<3/4\lambda_1$. We have 
  \begin{align*}
    f(24,1)&=\chi_2f(2,1)+\chi_2f(2,2)+\chi_2f(2,3)+\alpha_2f(2,4)+\frac14\bigl(
             1-\alpha_2-3\chi_2 \bigr)\\ 
    &=\alpha_2f(2,4)+\frac14\bigl( 1-\alpha_2-3\chi_2 \bigr).
  \end{align*}
  It follows that
  \[
    f(24,1)-f(2,4)=f(2,4)(\alpha_2-1)+\frac14(1-\alpha_2-3\chi_2)<0
  \]
  since $\alpha_2-1<0$ and $\frac14\bigl( 1-\alpha_2-3\chi_2 \bigr)<0$
  (see Lemma \ref{lem:formulas}). Hence
  $f(24,1)<f(2,4)=3\lambda_1/4$. Moreover, since $\alpha_2>\chi_2$
  (see Lemma \ref{lem:formulas}), we have 
  $f(24,2)>f(21,3)=3\lambda_2/4>0$ by comparing \eqref{eq:fw43}
  against \eqref{eq:fw13}.

  Using now equation \eqref{eq:fw42}, we obtain 
  \[
    f(24,2)=\beta_2f(2,4)+\frac14(1-\beta_2-3\delta_2).
  \]
  Hence
  \[
    f(24,2)-f(2,4)=f(2,4)(\beta_2-1)+\frac14(1-\beta_2-3\delta_2)<0
  \]
  since $\beta_2<1$ and $1-\beta_2-3\delta_2<0$ (see Lemma
  \ref{lem:formulas}). Hence $f(24,2)<f(2,4)=3\lambda_1/4$. Moreover,
  since $\beta_2>\delta_2$ it follows that
  $f(24,2)>f(21,3)=3\lambda_2/4>0$ by comparing \eqref{eq:fw42}
  against \eqref{eq:fw13}. So we are done with $i=2$ and
  $\vert\omega\vert=1$.

  Next we consider $i=5$ and $\vert\omega\vert=1$. We have 
  $\min_{j\in\{1,2,3,4\}}f(5,j)=\frac34\lambda_1$ and\\
  $\max_{j\in\{1,2,3,4\}}f(5,j)=1-\frac94\lambda_1$. Moreover, the
  maximum is attained at $j=1$ and the minimum is attained at $j=2,3$
  and $4$. Therefore, using the last part of Lemma \ref{lem:w234} we
  obtain 
  \[
    f(52,1)=f(52,3)=f(53,2)=f(53,4)=f(54,1)=f(54,3)
  \]
  and $f(52,4)=f(53,1)=f(54,2)$. Moreover, $f(52,4)=f(55,2)$,
  $f(53,1)=f(55,3)$, and $f(54,2)=f(55,4)$; all of these values are
  given by \eqref{eq:5m2} with $m=2$. We also know the value of
  $f(55,1)=f(51,3)$ from \eqref{eq:5m1}. Therefore, we only need to
  check that the  value of $f(52,1)$ 
   is between $3\lambda_1/4$ and $1-9\lambda_1/4$. Using
  \eqref{eq:fw21} we have 
  \[
    f(52,1)=\chi_2f(5,1)+\alpha_2f(5,2)+\chi_2f(5,3)+\chi_2f(5,4)+\frac14(1-\alpha-3\chi_2).
  \]
  Therefore,
  \begin{align*}
    f(52,1)-f(5,2)&=\chi_2f(5,1)+(\alpha_2-1)f(5,2)+\chi_2f(5,3)+\chi_2f(5,4)+\frac14(1-\alpha_2-3\chi_2)\\
    \intertext{which, since $\alpha_2-1=3(R(\lambda_2)-1)\chi_2$ (see
    Lemma \ref{lem:formulas}), equals}
    &\chi_2\bigl( 1-\frac94\lambda_1
      \bigr)+3(\lambda_1-1)\chi_2\frac34\lambda_1+\chi_2\frac34\lambda_1+\chi_2\frac34\lambda_1-\frac34\lambda_1\chi_2\\
    &=\chi_2\left( 1-\frac{15}4\lambda_1+\frac94\lambda_1^2 \right)>0
  \end{align*}
   by Lemma \ref{lem:formulas}. Hence $f(52,1)>f(5,2)$. To prove that
   $f(52,1)$ is smaller than $f(5,1)$ we proceed as follows using \eqref{eq:fw21}:
   \begin{align*}
     f(52,1)-f(5,1)&=(\chi_2-1)f(5,1)+\alpha_2f(5,2)+\chi_2f(5,3)+\chi_2f(5,4)+\frac14(1-\alpha_2-3\chi_2)\\
     &=(\chi_2-1)\bigl( 1-\frac94\lambda_1
       \bigr)+\alpha_2\frac34\lambda_1+\chi_2\frac34\lambda_1+\chi_2\frac34\lambda_1-\frac34\lambda_1\chi_2\\
     &=\chi_2\left( 1-\frac{15}4\lambda_1+\frac94\lambda_1^2 \right)-1+\frac94\lambda_1<0
   \end{align*}
   since, by Lemma \ref{lem:formulas}, $\chi_2\left(
     1-\frac{15}4\lambda_1+\frac94\lambda_1^2 \right)<0.22$ and
   $1-\frac94\lambda_1=0.625$ (for $n\ge 3$ we have 
   $1-\frac94\lambda_n>0.625$). Therefore $f(52,1)<f(5,1)$, and so
    we proved the statement of the Proposition for
   $\vert\omega\vert=1$.

   Moving to $\vert\omega\vert>1$, 
   we see that we can repeat the above arguments inductively when increasing the
   length of
   $\vert\omega\vert$ from $m$ to $m+1$. This is because, by
   Lemma \ref{lem:fw}, $f(i\omega k,j)$ depends only on the values
   $f(i\omega,1), f(i\omega,2), f(i\omega,3)$ and $f(i\omega,4)$. If
   $i=1$, then the possible combinations of these values are (see the
   computations above): three of
   them are equal to 1 and the fourth equals $1-\frac94\lambda_m$ (by
   Lemma \ref{lem:1m}); one value is $1-\frac94\lambda_{m-1}$, one
   value is $1-\frac94\lambda_m$, and the other two values are between
   $1-\frac94\lambda_{m-1}$ and 1; and
   three values equal $\frac34\lambda_m$ and one value equals
   $1-\frac94\lambda_m$. Then the proof given for $i=1$ and
   $\vert\omega\vert=1$ can be easily adapted to prove the inductive
   step.

   If $i=2$, the possible values of $f(2\omega,1), f(2\omega,2),
   f(2\omega,3)$ and $f(2\omega,4)$ are: three of them equal 0, and
   one equals $3\lambda_m/4$; three of them equal $3\lambda_m/4$ and
   the fourth one is between $3\lambda_m/4$ and $3\lambda_{m+1}/4$;
   and one of the values is $3\lambda_i/4$ for some $2\le i\le m-1$,
   and the other three values are between $3\lambda_m/4$ and $3\lambda_i/4$.
   Therefore, the arguments given above for $i=2$ and
   $\vert\omega\vert=1$ can be easily adapted to these cases to prove
   the inductive step.

   If $i=5$ then the possible values of $f(5\omega,1), f(5\omega,2),
   f(5\omega,3)$ and $f(5\omega,4)$: all four are given by Lemma
   \ref{lem:5m} and, in particular, three of them are equal; and three of
   them are equal and all of them are in between the values provided
   by Lemma \ref{lem:5m}.
   Then the above argument for $i=5$ and $\vert\omega\vert=1$  can
   also be adapted for the induction step on the length of $\omega$. 

   The last statement of Proposition \ref{prop:maxf} follows immediately since
   \[\max_{i\in\{1,2,3,4,5\}}\max_{j\in\{1,2,3,4\}}f(i,j)=1\] and
   \[\min_{i\in\{1,2,3,4,5\}}\min_{j\in\{1,2,3,4\}}f(i,j)=0.\] 
\end{proof}

\begin{proof}[Proof of Lemma \ref{lem:main}]
The  statement of Proposition \ref{prop:maxf} is true for $g,h$ and $k$ because
$g(\omega,i),$ $h(\omega,i)$ and $k(\omega,i)$ can be obtained from $f$ by
shifting the letters. For example,
$g(\omega,i)=f(R_2(\omega),R_2(i))$, where $R_2$ is the ``rotation''
defined in Remark \ref{rem:rot}, and similar formulas hold for $h$ and
$k$.  Therefore
$0\le f(\omega,i), g(\omega,i),h(\omega,i),k(\omega,i) \le 1$ for
every $\omega \in  \Sigma^*$ and for every $i \in \{1,2,3,4\}$. 
\end{proof}

\appendix

\section{A few facts used in the proofs}

We now collect several relationships and formulas satisfied by
$\alpha,\beta,\chi$ and $\delta$ that we used throughout the
paper. The proofs of the following statements are  straight
computations and/or easy calculus problems.
 \begin{lem}\label{lem:formulas}
  \begin{enumerate}
\item The map $R(\lambda)$ can be factored out as 
 \[
     R(\lambda)=36\lambda^3-48\lambda^2+15\lambda=3\lambda(1-2\lambda)(5-6\lambda),
     \]
and it satisfies the relation
   \[
     3R(\lambda)-4=(6\lambda-1)f_2(\lambda).
     \]
\item The following identities are true for all $\lambda$ that are not
  forbidden values:
\begin{align*}
  \gamma=\chi&=\frac{1}{3(4-29\lambda+60\lambda^2-36\lambda^3)}=\frac{1}{3(1-2\lambda)f_2(\lambda)},\\
  1+3\alpha-3\chi&=\frac{2\lambda-3}{2\lambda-1},\\
  1+3\beta-3\delta&=\frac{2\lambda-2}{2\lambda-1},\\
  1-\beta-3\delta&=\frac{18\lambda(\lambda-1)3(1-2\lambda)}{3(1-2\lambda)f_2(\lambda)}=\frac{18(\lambda-1)R(\lambda)}{3(1-2\lambda)f_2(\lambda)(5-6\lambda)},\\
  1-\alpha-3\chi&=-\frac{3\lambda(5-6\lambda)3(1-2\lambda)}{3(1-2\lambda)f_2(\lambda)}=-\frac{3R(\lambda)}{3(1-2\lambda)f_2(\lambda)},\\
  \beta-\delta&=\frac{1}{3-6\lambda},\\
  \alpha-1&=3(R(\lambda)-1)\chi_2.
\end{align*}
\item $0<\alpha<1$ and $0<\beta<1$ for all $0<\lambda<\lambda_2$, and $0<\delta<1$ for all
  $0<\lambda<1/6=\lambda_1$. In particular,
  $0<\alpha_n,\beta_n,\delta_n<1$ for all $n\ge 2$.
\item $1-\alpha-3\chi<0$ and $1-\beta-3\delta<0$.
\item $\beta>\delta$ and $\alpha>\chi$ for all
  $0<\lambda<1/6$. Therefore $\beta_n>\delta_n$ and $\alpha_n>\chi_n$
  for all $n\ge 2$.
\item $0.08<\chi\bigl( 1-\frac{15}4\lambda+\frac94\lambda^2 \bigr)<0.22$
  for all $0<\lambda<=1/6$.
\end{enumerate}
\end{lem}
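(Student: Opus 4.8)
The plan is to treat Lemma \ref{lem:formulas} as a block of rational-function identities followed by elementary sign analyses, all organized around the single common denominator $3(1-2\lambda)f_2(\lambda)$. I would first dispose of part (1), since everything downstream rests on it. Factoring $3\lambda$ out of $R(\lambda)=36\lambda^3-48\lambda^2+15\lambda$ leaves the quadratic $12\lambda^2-16\lambda+5$, and a one-line expansion confirms $12\lambda^2-16\lambda+5=(1-2\lambda)(5-6\lambda)$, giving $R(\lambda)=3\lambda(1-2\lambda)(5-6\lambda)$. The relation $3R(\lambda)-4=(6\lambda-1)f_2(\lambda)$ is then verified by expanding both cubics and matching coefficients, using $f_2(\lambda)=18\lambda^2-21\lambda+4$ from \eqref{eq:f2}. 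The same expansion yields the factorization $4-29\lambda+60\lambda^2-36\lambda^3=(1-2\lambda)f_2(\lambda)$, which is exactly what identifies the two displayed forms of $\gamma=\chi$ in part (2).

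For the remaining identities in part (2), I would substitute $\alpha=\gamma a$, $\beta=\gamma b$, $\chi=\gamma c$, $\delta=\gamma d$ with $a=9-42\lambda+36\lambda^2$, $b=6(1-4\lambda+3\lambda^2)$, $c=1$, $d=2-3\lambda$, and $\gamma=\chi=\tfrac{1}{3(1-2\lambda)f_2(\lambda)}$, writing each left-hand side over the common denominator $3(1-2\lambda)f_2(\lambda)$. Each identity then reduces to a polynomial identity in the numerators, checked by direct expansion; the cleanest organizing principle is that each numerator turns out to share a factor (typically $f_2(\lambda)$ or $5-6\lambda$) with the denominator, and the simple right-hand sides are precisely what survives the cancellation. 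For example, $1+3\alpha-3\chi$ has numerator $(3-2\lambda)f_2(\lambda)$ over $-(2\lambda-1)f_2(\lambda)$, collapsing to $\tfrac{2\lambda-3}{2\lambda-1}$. The forms involving $R(\lambda)$ use part (1) directly; in particular $\alpha-1=3(R(\lambda)-1)\chi$ follows from the single cubic identity $a-1/\chi=a-3(1-2\lambda)f_2(\lambda)=3(R(\lambda)-1)$.

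Parts (3)--(6) are sign and bound statements for these now-explicit rational functions. On $(0,1/6)$ the denominator $3(1-2\lambda)f_2(\lambda)$ is positive, since $1-2\lambda>0$ and $f_2$ is positive there (its smallest root $(7-\sqrt{17})/12\approx 0.24$ exceeds $1/6$), so each inequality reduces to the sign of an explicit numerator polynomial, settled by locating roots or by checking monotonicity through the derivative on the interval. Since $\lambda_2<\lambda_1=1/6$ and $\{\lambda_n\}$ decreases to $0$, the pointwise bounds transfer at once to all $\lambda_n$ with $n\ge 2$, giving the ``in particular'' statements in (3) and (5). The two-sided numerical bound in (6) is the only genuinely analytic step: I would study $G(\lambda)=\chi(\lambda)\bigl(1-\tfrac{15}{4}\lambda+\tfrac{9}{4}\lambda^2\bigr)$ on $(0,1/6]$, compute $G'$, locate its critical points, and evaluate $G$ there and at the endpoints to pin down $0.08<G(\lambda)<0.22$.

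The main obstacle here is not conceptual but organizational: as the authors note, these are straight computations and easy calculus, so the risk is simply losing consistency across the many interlocking polynomial identities. The one place demanding real care is part (6), where the concrete window $(0.08,0.22)$ must be established numerically rather than merely shown to be \emph{some} bounded interval, since those explicit constants are what the proof of Proposition \ref{prop:maxf} invokes for $i=5$.
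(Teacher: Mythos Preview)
Your proposal is correct and follows essentially the same approach as the paper: the authors state that the proofs are ``straight computations and/or easy calculus problems,'' cite \cite{ConStrWhe} for parts (1)--(2), verify (3) via direct computation checked with Maxima, and for (4)--(6) simply appeal to standard calculus while displaying graphs of the relevant functions in lieu of written arguments. If anything, your plan is more explicit and self-contained than the paper's own proof, which largely outsources the verification to CAS and pictures.
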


\begin{proof}
  \begin{enumerate}
  \item  The formula for $f_2$ is proved in \cite{ConStrWhe} and follows
    immediately from its definition.    
  \item These formulas were also discussed and used in
    \cite{ConStrWhe}.
  \item These formulas follow by direct computations. We also used the
    Maxima CAS \cite{maxima} to double check our computations. The Maxima code that
    we used is provided on our website.
  \item This can be shown using standard methods of calculus. We provide next
    pictures that illustrate our claim:
      \begin{figure*}[ht]
    \centering
    \begin{subfigure}[b]{0.3\textwidth}
        \includegraphics[width=\textwidth]{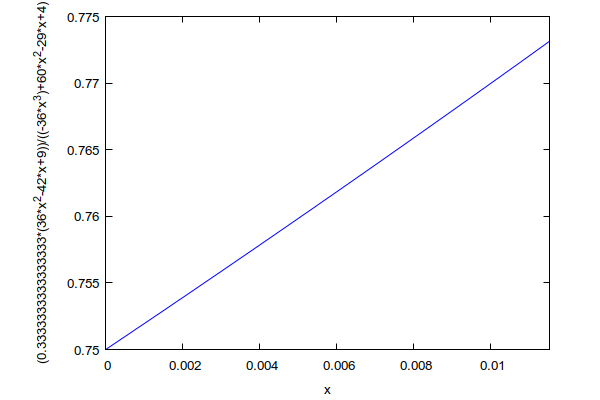}
        \caption{$\alpha$ with $0\le\lambda\le \lambda_2$.}
    \end{subfigure}%
 ~
    \begin{subfigure}[b]{0.3\textwidth}
        \includegraphics[width=\textwidth]{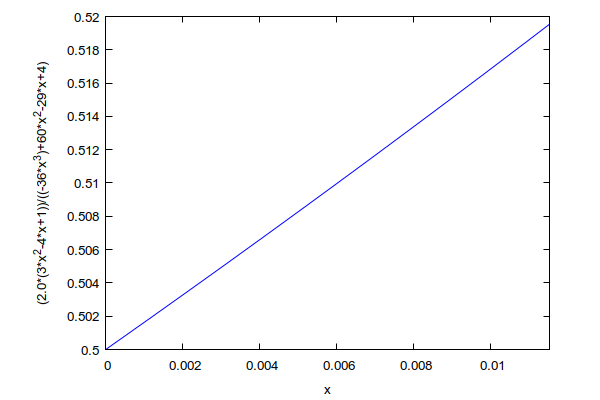}
        \caption{$\alpha$ with $0\le\lambda\le \lambda_2$.}       
      \end{subfigure}
      ~
       \begin{subfigure}[b]{0.3\textwidth}
        \includegraphics[width=\textwidth]{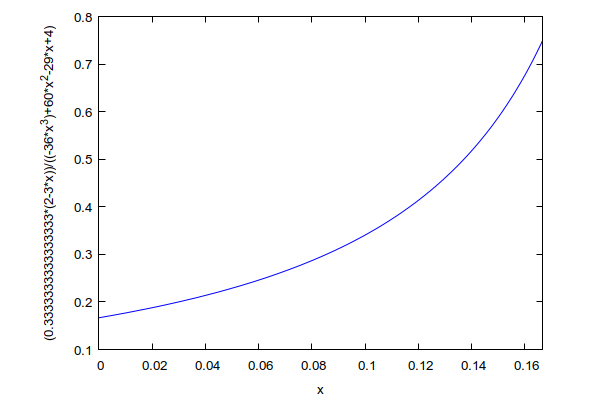}
        \caption{$\delta$ with $0\le\lambda\le \lambda_1$.}
    \end{subfigure}
\end{figure*}
   \item Once again we provide a picture instead of presenting the
     computation of the derivative of each expression:
           \begin{figure*}[ht]
    \centering
    \begin{subfigure}[b]{0.35\textwidth}
        \includegraphics[width=\textwidth]{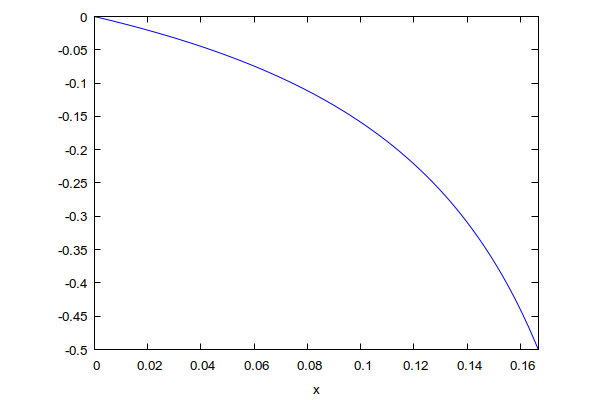}
        \caption{$\frac14(1-\alpha-3\chi)$ with $0\le\lambda\le \lambda_1$.}
    \end{subfigure}%
 ~
    \begin{subfigure}[b]{0.35\textwidth}
        \includegraphics[width=\textwidth]{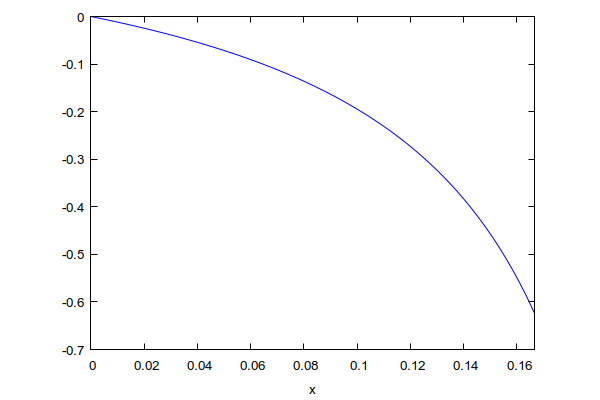}
        \caption{$\frac14(1-\beta-3\delta)$ with $0\le\lambda\le \lambda_1$.}       
      \end{subfigure}
    \end{figure*}
  \item One more picture to illustrate the last statement:
    \begin{figure*}[ht]
      \centering
      \includegraphics[scale=0.35]{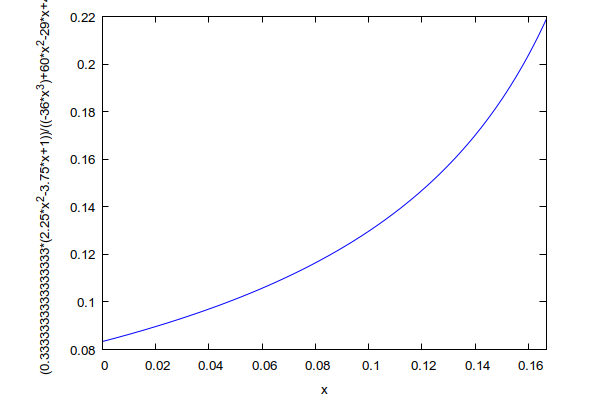}
      \caption{$\chi\bigl(1-\frac{15}4\lambda+\frac94\lambda^2\bigr)$
        with $0\le \lambda\le \lambda_1$.}
    \end{figure*}
  \end{enumerate}
\end{proof}
\bibliographystyle{unsrt}
\bibliography{Vicsek}

\end{document}